\documentclass[12pt]{amsart}

\usepackage{amsthm}
\usepackage{amssymb}
\usepackage{amsmath}
\usepackage{easywncy}

%%%%%%%%%%%%%%%%%%%%%%%%%%%%%%%%%%%%%%%%%%%%%%%%%%%%
\textwidth=15cm
\textheight=21cm
\hoffset=-1.3cm
\baselineskip=18pt plus 3pt
%%%%%%%%%%%%%%%%%%%%%%%%%%%%%%%%%%%%%%%%%%%%%%%%%%%%
\numberwithin{equation}{section}
\newtheorem{thm}{Theorem}[section]
\newtheorem{prop}[thm]{Proposition}
\newtheorem{lem}[thm]{Lemma}
\newtheorem{cor}[thm]{Corollary}

\theoremstyle{definition}

\newtheorem{defn}[thm]{Definition}
\newtheorem{rem}[thm]{Remark}

%%%%%%%%%%%%%%%%%%%%%%%%%%%%%%%%%%%%%%%%%%%%%%%%%%%%

\newcommand{\ba}{a}
\newcommand{\bb}{b}
\newcommand{\be}{\mathbf{e}}
\newcommand{\bk}{\mathbf{k}}
\newcommand{\bl}{\mathbf{l}}
\newcommand{\bx}{x}
\newcommand{\by}{y}
\newcommand{\bz}{z}
\newcommand{\blam}{\boldsymbol{\lambda}}
\newcommand{\bmu}{\boldsymbol{\mu}}
\newcommand{\shp}{\,\mathcyr{sh}\,}

%%%%%%%%%%%%%%%%%%%%%%%%%%%%%%%%%%%%%%%%%%%%%%%%%%%%

\begin{document}

\title[Derivations for $q$-series and Applications]
{Derivations on the algebra of multiple harmonic $q$-series and their applications}
\author{Yoshihiro Takeyama}
\address{Department of Mathematics, 
Faculty of Pure and Applied Sciences, 
University of Tsukuba, Tsukuba, Ibaraki 305-8571, Japan}
\email{takeyama@math.tsukuba.ac.jp}
\thanks{
This is a pre-print of an article published in The Ramanujan Journal. 
The final authenticated version is available online at: 
\texttt{https://doi.org/10.1007/s11139-019-00139-y}. 
}

\begin{abstract}
We introduce derivations on the algebra of multiple harmonic $q$-series 
and show that they generate linear relations among the $q$-series 
which contain the derivation relations for a $q$-analogue of multiple zeta values due to Bradley. 
As a byproduct we obtain Ohno-type relations for 
finite multiple harmonic $q$-series at a root of unity.  
\end{abstract}
%%%%%%%%%%%%%%%%%%%%%%%%%%%%%%%%%%%%%
\maketitle

\setcounter{section}{0}
\setcounter{equation}{0}

%%%%%%%%%%%%%%%%%%%%%%%%%%%%%%%%%%%%%%%%%%%%%%%%%%%%%%%%%%%%%%%%%%%%%%%%%%%%%%%

\section{Introduction}

In this article we introduce derivations on the algebra of multiple harmonic $q$-series 
and show that they generate linear relations among the $q$-series, 
which are a slight generalization of Bradley's result in \cite{Bradley}. 
As a byproduct we obtain Ohno-type relations for 
finite multiple harmonic $q$-series at a root of unity 
introduced by Bachmann, Tasaka and the author in \cite{BTT}. 

For a tuple $\bk=(k_{1}, \ldots , k_{r})$ of positive integers with $k_{1} \ge 2$, 
the \textit{multiple zeta value} (MZV) $\zeta(\bk)$ is defined by 
\begin{align*}
\zeta(\bk)=\sum_{m_{1}>\cdots >m_{r} \ge 1}\frac{1}{m_{1}^{k_{1}}\cdots m_{r}^{k_{r}}}.  
\end{align*}
In \cite{IKZ} Ihara, Kaneko and Zagier introduce derivations on the algebra of MZVs and 
described linear relations among MZVs by using them (see also \cite{HO}). 
The precise statement is as follows. 
Let $\mathfrak{h}=\mathbb{Q}\langle x, y \rangle$ be the non-commutative polynomial ring, 
and set $z_{k}=x^{k-1}y$ for $k \ge 1$. 
Denote by $\mathfrak{h}^{0}$ the $\mathbb{Q}$-submodule of $\mathfrak{h}$ spanned by 
$1$ and the monomials $z_{k_{1}} \cdots z_{k_{r}}$ with $k_{1} \ge 2$. 
Then the $\mathbb{Q}$-linear map $Z: \mathfrak{h}^{0} \to \mathbb{R}$ is uniquely defined 
by $Z(1)=1$ and $Z(z_{k_{1}}\cdots z_{k_{r}})=\zeta(k_{1}, \ldots , k_{r})$. 
Now define the $\mathbb{Q}$-linear derivation $\partial_{n} \, (n \ge 1)$ on $\mathfrak{h}$ by 
$\partial_{n}(\bx)=-\partial_{n}(\by)=\bx(\bx+\by)^{n-1}\by$. 
Then it holds that 
$Z(\partial_{n}(w))=0$ for any $n \ge 1$ and $w \in \mathfrak{h}^{0}$. 
This gives linear relations among MZVs, which are called the derivation relations. 

In \cite{Bradley} Bradley proves that the derivation relations hold also for 
a $q$-analogue of MZVs defined by 
\begin{align}
\zeta_{q}(\bk)=\sum_{m_{1}>\cdots >m_{r}\ge 1}\frac{q^{(k_{1}-1)m_{1}+\cdots +(k_{r}-1)m_{r}}}
{[m_{1}]^{k_{1}} \cdots [m_{r}]^{k_{r}}},  
\label{eq:intro-qMZV-BZ}
\end{align}
where $q$ is a parameter satisfying $|q|<1$ and $[m]=(1-q^{m})/(1-q)$.  
The $q$-analogue model \eqref{eq:intro-qMZV-BZ} is often called the Bradley-Zhao model, 
which is a special value of the $q$-zeta function 
studied by Kaneko, Kurokawa and Wakayama \cite{KKW} in the case of $r=1$  
and Zhao \cite{Zhao} for $r \ge 2$. 
In \cite{Bradley} Bradley establishes that 
the model \eqref{eq:intro-qMZV-BZ} satisfies Ohno's relation \cite{Ohno} for MZVs in the same form, 
and then the derivation relations are obtained as a corollary. 

In this article we extend the derivation relations to 
a more general class of multiple harmonic $q$-series. 
Denote by $\mathcal{Z}_{q}$ the $\mathbb{Q}$-vector space spanned by 
the $q$-series of the following form 
\begin{align}
(1-q)^{s}\sum_{m_{1}>\cdots >m_{r}\ge 1}
\frac{q^{l_{1}m_{1}+\cdots +l_{r}m_{r}}}{[m_{1}]^{k_{1}} \cdots [m_{r}]^{k_{r}}}, 
\label{eq:gen-qMZV}
\end{align}
where $s \in \mathbb{Z}, l_{1} \ge 1$ and 
$k_{j}\ge 1, k_{j}\ge l_{j}\ge 0$ for any $j$. 
Note that the space $\mathcal{Z}_{q}$ contains 
the Bradley-Zhao model and other various $q$-analogue models due to, 
for example, Schlesinger \cite{Schlesinger}, 
Ohno-Okuda-Zudilin \cite{OOZ, Zudilin}, Okounkov \cite{Okounkov} and Bachmann-K\"uhn \cite{BK}.  

The proof of our derivation relations runs almost parallel to that in \cite{IKZ}. 
We define a subalgebra $\widehat{\mathfrak{H}^{0}}$ of 
a non-commutative polynomial ring with two indeterminates 
and the map $Z_{q}$ which sends an element of $\widehat{\mathfrak{H}^{0}}$ to $\mathcal{Z}_{q}$. 
As for the algebra $\mathfrak{h}^{0}$ of MZVs we can define two commutative products 
called the stuffle product and the shuffle product, 
and establish the double shuffle relations. 
Then we define the derivations $\partial_{n} \, (n \ge 1)$ on the algebra $\widehat{\mathfrak{H}^{0}}$ 
which relate the two products,  
and we obtain the derivation relations on $\mathcal{Z}_{q}$ from the double shuffle relations. 

In this paper, as a byproduct of the above construction,    
we also prove Ohno-type relations for finite multiple harmonic $q$-series at a root of unity 
Let $n$ be a positive integer and $\zeta_{n}$ a primitive $n$-th root of unity.  
Set 
\begin{align*}
z_{n}(\bk; \zeta_{n})=\sum_{n>m_{1}>\cdots >m_{r} \ge 1}
\frac{q^{(k_{1}-1)m_{1}+\cdots +(k_{r}-1)m_{r}}}{[m_{1}]^{k_{1}} \cdots [m_{r}]^{k_{r}}} 
\bigg|_{q=\zeta_{n}}.   
\end{align*} 
In \cite{BTT} Bachmann, Tasaka and the author found that 
the value $z_{n}(\bk; \zeta_{n})$ reproduces the finite multiple zeta values (FMZVs) 
and the symmetric multiple zeta values (SMZVs) introduced by 
Kaneko and Zagier \cite{KZ} 
through an algebraic operation and an analytic one, respectively. 
Kaneko and Zagier conjecture that there exists a $\mathbb{Q}$-algebra isomorphism 
which sends FMZVs to SMZVs (see Section \ref{subsec:FMHqS} for the precise statement).  
The above-mentioned property of $z_{n}(\bk; \zeta_{n})$ offers an explanation, though not a proof, 
for the conjecture. 

In \cite{Oyama} Oyama proves the Ohno-type relations for FMZVs and SMZVs  
by making use of the double shuffle relations among them and 
the derivations on the algebra $\mathfrak{h}$. 
In a similar manner we can prove Ohno-type relations for $z_{n}(\bk; \zeta_{n})$ 
by using the derivations defined in this paper. 
Our relations turn into the relations due to Oyama 
through the algebraic and analytic operation. 
Moreover, we also obtain Ohno-type relations for 
the cyclotomic analogue of FMZVs introduced in \cite{BTT}. 

The paper is organized as follows. 
In Section \ref{sec:DS} we formulate the double shuffle relations for 
multiple harmonic $q$-series. 
We prove the derivation relations on the space $\mathcal{Z}_{q}$ in Section \ref{sec:derivation}. 
In Section \ref{sec:Ohno} we show the Ohno-type relations for 
finite multiple harmonic $q$-series at a root of unity $z_{n}(\bk; \zeta_{n})$.

%%%%%%%%%%%%%%%%%%%%%%%%%%%%%%%%%%%%%%%%%%%%%%%%%%%%%%%%%%%%%%%%%%%%%%%%%%%%%%%

\section{Double shuffle relations for multiple harmonic $q$-series}\label{sec:DS}

\subsection{Preliminaries}

Let $\mathbb{N}$ be the set of positive integers. 
We introduce the letter $\overline{1}$ and set 
$\widehat{\mathbb{N}}=\{\overline{1}\}\sqcup \mathbb{N}=\{\overline{1}, 1, 2, \ldots \}$. 
Throughout this paper 
we call an ordered set $\mathbf{k}=(k_{1}, \ldots , k_{r})$ of elements of $\widehat{\mathbb{N}}$ 
an \textit{index}. 
An empty set $\emptyset$ is regarded as an index with $r=0$. 
We denote by $\widehat{I}$ the set of indices: 
\begin{align*}
\widehat{I}=
\{(k_{1}, \ldots , k_{r}) \,| \, r \ge 0 \,\, \hbox{and} \,\, k_{1}, \ldots , k_{r} \in \widehat{\mathbb{N}} \}.  
\end{align*}
We will use the subsets $I, \widehat{I}_{0}$ and $I_{0}$ of $\widehat{I}$ defined by 
\begin{align*}
& 
I=\{(k_{1}, \ldots , k_{r}) \in \widehat{I} \,| \, r \ge 0 \,\, \hbox{and} \,\, 
k_{j}\not=\overline{1} \, \hbox{for any $j$}\}, \\ 
& 
\widehat{I}_{0}=\{(k_{1}, \ldots , k_{r}) \in \widehat{I} \,| \, r \ge 0 \,\, \hbox{and} \,\, 
k_{1}\not=1 \}, \\ 
& 
I_{0}=I \cap \widehat{I}_{0}. 
\end{align*}
Note that the empty index $\emptyset$ belongs to all the three sets above. 

Fix a parameter $q \in \mathbb{C}$. 
For $m\ge 1$ we set 
\begin{align*}
F_{\overline{1}}(m)=\frac{q^{m}}{[m]}, \qquad 
F_{k}(m)=\frac{q^{(k-1)m}}{[m]^{k}} \quad (k\ge 1),  
\end{align*}
where $[m]=(1-q^{m})/(1-q)$ is the $q$-integer. 

Now assume that $|q|<1$. 
For a non-empty index $\mathbf{k}=(k_{1}, \ldots , k_{r})$ which belongs to $\widehat{I}_{0}$, 
we set 
\begin{align}
\zeta_{q}(\bk)=\sum_{m_{1}>\cdots >m_{r} \ge 1}\prod_{j=1}^{r}F_{k_{j}}(m_{j}).  
\label{eq:def-qMZV}
\end{align}
It converges absolutely because 
$|I_{k}(m)|\le |1-q||q|^{m}/(1-|q|)$ for any 
$k \in \widehat{\mathbb{N}}\setminus\{1\}$ and $m\ge 1$.
If $\bk \in I_{0}$, it is often called the \textit{Bradley-Zhao model of 
a $q$-analogue of multiple zeta values} \cite{Bradley, Zhao}. 
Hereafter we consider a more general class of $q$-series of the form \eqref{eq:def-qMZV} 
containing the factor $F_{\overline{1}}(m)=q^{m}/[m]$.  
By definition we set $\zeta_{q}(\emptyset)=1$.  

For a non-empty index $\bk=(k_{1}, \ldots , k_{r}) \in \widehat{I}$ 
we define the \textit{multiple polylogarithm of one variable} $L_{\bk}(t)$ by 
\begin{align*}
L_{\bk}(t)=\sum_{m_{1}>\cdots >m_{r} \ge 1}t^{m_{1}}\prod_{j=1}^{r}F_{k_{j}}(m_{j}).  
\end{align*}
We set $L_{\emptyset}(t)=1$. 
Note that $L_{\bk}(1)=\zeta_{q}(\bk)$ if $\bk \in \widehat{I}_{0}$.  

Let $\hbar$ be a formal variable and set $\mathcal{C}=\mathbb{Q}[\hbar, \hbar^{-1}]$. 
Denote by $\mathfrak{H}$ the non-commutative polynomial ring over $\mathcal{C}$ 
with two indeterminates $\ba$ and $\bb$.  
Set 
\begin{align*}
e_{\overline{1}}=\ba\bb, \qquad 
e_{k}=\ba^{k-1}(\ba+\hbar)\bb \quad (k \ge 1). 
\end{align*}
We denote by $\widehat{\mathfrak{H}^{1}}$ 
the subalgebra freely generated by the set $\{e_{k}\}_{k \in \widehat{\mathbb{N}}}$. 
Hereafter we will also use the element $e_{\overline{n}} \, (n \in \mathbb{N})$ of 
$\widehat{\mathfrak{H}^{1}}$ defined by 
\begin{align}
e_{\overline{n}}=\ba^{n}\bb=
\sum_{j=2}^{n}(-\hbar)^{n-j}e_{j}+(-\hbar)^{n-1}e_{\overline{1}}. 
\label{eq:def-enbar}
\end{align}
For a non-empty index $\bk=(k_{1}, \ldots , k_{r})$ we define 
$e_{\bk}=e_{k_{1}}\cdots e_{k_{r}}$. 
For the empty index we set $e_{\emptyset}=1$. 

We denote by $\widehat{\mathfrak{H}^{0}}$ the $\mathcal{C}$-submodule of $\mathfrak{H}$ 
spanned by the monomials $e_{\bk}$ with $\bk \in \widehat{I}_{0}$. 
We endow $\mathbb{C}$ with $\mathcal{C}$-module structure such that 
$\hbar$ acts as multiplication by $1-q$, 
and define the $\mathcal{C}$-linear map $Z_{q}: \widehat{\mathfrak{H}^{0}} \to \mathbb{C}$ by 
\begin{align*}
Z_{q}(e_{\bk})=\zeta_{q}(\bk) 
\end{align*}
for any $\bk \in \widehat{I}_{0}$. 
Since 
\begin{align*}
&
\frac{q^{lm}}{[m]^{k}}=\sum_{j=1}^{k-l}\binom{k-l}{j-1}(1-q)^{k-l-j}F_{l+j}(m) \qquad (k>l\ge 0),  \\ 
& 
\frac{q^{km}}{[m]^{k}}=\sum_{j=2}^{k}(q-1)^{k-j}F_{j}(m)+(q-1)^{k-1}F_{\overline{1}}(m)
\end{align*}
for $k \ge 1$ and $m \ge 1$, 
we see that the image $Z_{q}(\widehat{\mathfrak{H}^{0}})$ is equal to the $\mathbb{Q}$-vector space 
$\mathcal{Z}_{q}$ spanned by the $q$-series of the form \eqref{eq:gen-qMZV}.

%%%%%%%%%%%%%%%%%%%%%%%%%%%%
\subsection{Double shuffle relations}

The double shuffle relations for multiple harmonic $q$-series $\zeta_{q}(\bk)$ are 
described in terms of two commutative and associative multiplication 
called the \textit{stuffle product} $*_{q}$ and the \textit{shuffle product} $\shp_{\! q}$. 
Here we recall their definitions and basic properties. 
See, e.g., \cite{CEM, Zhao2} for details. 

Denote by $\mathfrak{z}$ the $\mathcal{C}$-submodule of $\mathfrak{H}$ spanned by the set  
$\{e_{k}\}_{k \in \widehat{\mathbb{N}}}$. 
For $m\ge 1$ it holds that 
\begin{align*}
& 
F_{\overline{1}}(m)^{2}=F_{2}(m)-(1-q)F_{\overline{1}}(m), \qquad 
F_{\overline{1}}(m)F_{k}(m)=F_{k+1}(m) \quad (k \ge 1), \\ 
& 
F_{k}(m)F_{l}(m)=F_{k+l}(m)+(1-q)F_{k+l-1}(m) \quad (k, l\ge 1). 
\end{align*}
Motivated by the above properties we define 
the $\mathcal{C}$-bilinear symmetric map $\circ_{q}: \mathfrak{z}\times \mathfrak{z} \to \mathfrak{z}$ by  
\begin{align*}
e_{\overline{1}}\circ_{q} e_{\overline{1}}=e_{2}-\hbar \, e_{\overline{1}}, \quad 
e_{\overline{1}}\circ_{q} e_{k}=e_{k}\circ_{q} e_{\overline{1}}=e_{k+1}, \quad 
e_{k}\circ_{q} e_{l}=e_{k+l}+\hbar \, e_{k+l-1} 
\end{align*}
for $k, l\ge 1$. 
Note that 
$e_{\overline{1}} \circ_{q} e_{\overline{n}}=e_{\overline{n+1}}$ 
for $n \ge 1$, 
where $e_{\overline{n}}$ is defined by  \eqref{eq:def-enbar}. 

The stuffle product $*_{q}$ is the $\mathcal{C}$-bilinear binary operation on $\widehat{\mathfrak{H}^{1}}$ 
uniquely determined by 
\begin{align*}
& 
1*_{q}w=w*_{q}1=1, \\ 
& 
(e_{k}w)*_{q}(e_{l}w')=e_{k}(w*_{q}e_{l}w')+e_{l}(e_{k}w*_{q}w')+(e_{k}\circ_{q} e_{l})(w*_{q}w') 
\end{align*}
for $w, w' \in \widehat{\mathfrak{H}^{1}}$ and $k, l \in \widehat{\mathbb{N}}$. 
Note that the subalgebra $\widehat{\mathfrak{H}^{0}}$ is closed under the stuffle product $*_{q}$. 

\begin{prop}\label{prop:stuffle-rel}
For $w, w' \in \widehat{\mathfrak{H}^{0}}$ it holds that 
\begin{align*}
Z_{q}(w*_{q}w')=Z_{q}(w)Z_{q}(w'). 
\end{align*}
\end{prop}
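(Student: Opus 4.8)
The plan is to reduce to monomials, prove a truncated (finite-sum) version of the identity by induction on the total depth, and then pass to the limit $N\to\infty$. First I would use the $\mathcal{C}$-linearity of $Z_{q}$ and the $\mathcal{C}$-bilinearity of $*_{q}$ to reduce to the case $w=e_{\bk}$, $w'=e_{\bl}$ with $\bk=(k_{1},\ldots,k_{r})$ and $\bl=(l_{1},\ldots,l_{s})$ in $\widehat{I}_{0}$. To avoid convergence issues while rearranging infinite sums, I would introduce the truncated series $\zeta_{q}^{<N}(\bk)=\sum_{N>m_{1}>\cdots>m_{r}\ge 1}\prod_{j=1}^{r}F_{k_{j}}(m_{j})$, which is a finite sum and hence well-defined for \emph{every} index irrespective of admissibility, together with the corresponding $\mathcal{C}$-linear truncated map $Z_{q}^{<N}$ sending $e_{\bk}\mapsto\zeta_{q}^{<N}(\bk)$. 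I note at the outset the elementary identity $Z_{q}^{<N}(e_{k_{1}}u)=\sum_{m_{1}<N}F_{k_{1}}(m_{1})\,Z_{q}^{<m_{1}}(u)$, valid for all $u\in\widehat{\mathfrak{H}^{1}}$, which will perform the role of the ``summation'' step in the recursion.

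The core step is the finite identity $\zeta_{q}^{<N}(\bk)\,\zeta_{q}^{<N}(\bl)=Z_{q}^{<N}(e_{\bk}*_{q}e_{\bl})$, which I would prove for all indices and all $N$ simultaneously by induction on $r+s$, the base case ($r=0$ or $s=0$) being immediate from $1*_{q}w=w$. For the inductive step, writing $e_{\bk}=e_{k_{1}}w$ and $e_{\bl}=e_{l_{1}}w'$, I would expand the product of the two finite nested sums and partition the summation region according to the three mutually exclusive cases $m_{1}>n_{1}$, $n_{1}>m_{1}$, and $m_{1}=n_{1}$ for the two leading summation indices. In the first region $m_{1}$ is the unique overall maximum; factoring out $F_{k_{1}}(m_{1})$ leaves, for each fixed $m_{1}$, a product of two truncated series of strictly smaller total depth, both truncated at level $m_{1}$, which by the induction hypothesis equals $Z_{q}^{<m_{1}}(w*_{q}e_{\bl})$, and summing over $m_{1}<N$ reproduces the term $e_{k_{1}}(w*_{q}e_{l_{1}}w')$. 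The second region gives $e_{l_{1}}(e_{k_{1}}w*_{q}w')$ symmetrically. In the diagonal region $m_{1}=n_{1}=:m$ the two leading factors combine as $F_{k_{1}}(m)F_{l_{1}}(m)$, and by the three pointwise product formulas recalled just before the definition of $\circ_{q}$ — valid for all $k_{1},l_{1}\in\widehat{\mathbb{N}}$ including the letter $\overline{1}$ — this equals the image under $F$ of $e_{k_{1}}\circ_{q}e_{l_{1}}$, using that $\hbar$ acts as multiplication by $1-q$, so that this region yields $(e_{k_{1}}\circ_{q}e_{l_{1}})(w*_{q}w')$. Summing the three contributions recovers exactly the defining recursion of $*_{q}$.

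Finally I would let $N\to\infty$. Since $\bk,\bl\in\widehat{I}_{0}$, both $\zeta_{q}(\bk)$ and $\zeta_{q}(\bl)$ converge absolutely by the bound $|F_{k}(m)|\le|1-q|\,|q|^{m}/(1-|q|)$ for $k\neq 1$, so the left-hand side tends to $\zeta_{q}(\bk)\zeta_{q}(\bl)=Z_{q}(e_{\bk})Z_{q}(e_{\bl})$. Because $\widehat{\mathfrak{H}^{0}}$ is closed under $*_{q}$, the element $e_{\bk}*_{q}e_{\bl}$ is a $\mathcal{C}$-linear combination of monomials whose indices lie in $\widehat{I}_{0}$, each of which therefore has a convergent limit, so $Z_{q}^{<N}(e_{\bk}*_{q}e_{\bl})\to Z_{q}(e_{\bk}*_{q}e_{\bl})$; equating the two limits gives the proposition.

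I expect the main obstacle to be the bookkeeping of the diagonal case: one must verify that $F_{k_{1}}(m)F_{l_{1}}(m)$ reproduces $\circ_{q}$ for every combination of ordinary letters and the special letter $\overline{1}$, and confirm that the specialization $\hbar\mapsto 1-q$ is applied consistently throughout. The truncation device is what legitimizes the partition of the summation region into three pieces, which would otherwise be merely formal, and it also lets the induction run over all indices without any admissibility constraint, the admissibility being needed only at the very last step to guarantee convergence of the limits.
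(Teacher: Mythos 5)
Your proof is correct, but there is nothing in the paper to compare it against: Proposition \ref{prop:stuffle-rel} is stated without proof, being recalled as a basic property of the stuffle product with a pointer to the references (Castillo-Medina--Ebrahimi-Fard--Manchon and Zhao) ``for details.'' What you have supplied is the standard truncation argument used in that literature, and it is complete. The key points are all handled: the truncated map $Z_{q}^{<N}$ is defined on all of $\widehat{\mathfrak{H}^{1}}$, so the induction on total depth can legitimately pass through non-admissible tails (the step that would break if one tried to manipulate the infinite series directly); the three-way partition of the leading summation indices reproduces exactly the defining recursion of $*_{q}$, with the diagonal region matching the pointwise products $F_{k}(m)F_{l}(m)$ against $e_{k}\circ_{q}e_{l}$ under the specialization $\hbar\mapsto 1-q$ for all letters including $\overline{1}$; and the passage $N\to\infty$ is justified on the right-hand side by the closure of $\widehat{\mathfrak{H}^{0}}$ under $*_{q}$ (asserted in the paper), which guarantees that every monomial occurring in $e_{\bk}*_{q}e_{\bl}$ is admissible, so its truncated sum converges to its $Z_{q}$-value. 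One incidental merit of your write-up: you use the correct unit rule $1*_{q}w=w*_{q}1=w$, whereas the paper's displayed recursion contains the typo $1*_{q}w=w*_{q}1=1$.
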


Let $D_{r}=\{t \in \mathbb{C}\,|\,|t|<r\}$ be the open disc with radius $r$. 
Denote by $\mathcal{H}$ the $\mathbb{C}$-vector space of 
holomorphic functions $f$ on the unit disk $D_{1}$ satisfying the following condition: 
\begin{align*}
0<\forall{r}<1, \, \exists{C}>0, \, \forall{j}\in\mathbb{N}, \, \forall{t}\in \overline{D_{r}}\,\hbox{:}\, 
|f(q^{j}t)|\le C|q|^{j}. 
\end{align*}
We define the $\mathbb{Q}$-linear action of $\mathfrak{H}$ on $\mathcal{H}$ by 
$(\hbar f)(t)=(1-q)f(t)$ and 
\begin{align*}
(\ba f)(t)=(1-q)\sum_{j=1}^{\infty}f(q^{j}t), \qquad 
(\bb f)(t)=\frac{t}{1-t}f(t).  
\end{align*}
Note that $tf(t)/(1-t) \in \mathcal{H}$ for any bounded holomorphic function $f$ on $D_{1}$. 
Hence, for any index $\bk=(k_{1}, \ldots , k_{r})$, 
the polylogarithm $L_{\bk}$ is written as 
\begin{align*}
L_{\bk}=e_{k_{1}}\cdots e_{k_{r}}(\mathbf{1}),  
\end{align*}
where $\mathbf{1}$ is the constant function $\mathbf{1}(t)\equiv 1$. 
We define the $\mathcal{C}$-linear map 
$L: \widehat{\mathfrak{H}^{1}} \to \mathcal{F}+\mathbb{C}\mathbf{1}, \, w \mapsto L_{w}$ 
by $L_{e_{\bk}}=L_{\bk}$ for any index $\bk$. 

For $f, g \in \mathcal{F}$ it holds that 
\begin{align*}
(\ba f)(\ba g)=\ba\left((\ba f)g+f(\ba g)+\hbar fg\right), \qquad 
(\bb f)g=f(\bb g)=\bb(fg). 
\end{align*}
Motivated by the above properties we define 
the shuffle product $\shp_{\! q}$ as the $\mathcal{C}$-bilinear map 
$\shp_{\! q}: \mathfrak{H} \times \mathfrak{H}\to \mathfrak{H}$ by 
\begin{align*}
& 
1\shp_{\! q} w=w \shp_{\! q} 1=w, \\ 
& 
(\ba w)\shp_{\! q}(\ba w')=\ba ((\ba w)\shp_{\! q} w'+w\shp_{\! q}(\ba w')+\hbar\, w\shp_{\! q} w'), \\ 
& 
(\bb w)\shp_{\! q} w'=w\shp_{\! q} (\bb w')=\bb (w \shp_{\! q} w') 
\end{align*}
for $w, w' \in \mathfrak{H}$. 

\begin{prop}\label{prop:shuffle-rel}
\begin{enumerate}
 \item For $w, w' \in \widehat{\mathfrak{H}^{1}}$ it holds that $L_{w \shp_{\! q} w'}=L_{w}L_{w'}$.  
 \item The subalgebra $\widehat{\mathfrak{H}^{0}}$ is closed under the shuffle product $\shp_{\! q}$. 
 \item For $w, w' \in \widehat{\mathfrak{H}^{0}}$ it holds that $Z_{q}(w \shp_{\! q} w')=Z_{q}(w)Z_{q}(w')$. 
\end{enumerate} 
\end{prop}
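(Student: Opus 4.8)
The plan is to treat the three parts in order, with (i) carrying the essential content and (iii) following formally from (i) and (ii).

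For (i) I would show that the operator realization $w \mapsto L_{w}=w(\mathbf{1})$ is an algebra homomorphism from $(\widehat{\mathfrak{H}^{1}},\shp_{\! q})$ to the algebra of functions under pointwise product. Since every generator $e_{k}$ ends in the letter $\bb$, each $e_{\bk}$ is a $\mathcal{C}$-combination of monomials in $\ba,\bb$ ending in $\bb$, so $L_{w}=w(\mathbf{1})$ is well defined: the rightmost operator applied to $\mathbf{1}$ is $\bb$, and $(\bb\mathbf{1})(t)=t/(1-t)$ lies in the relevant function space, whereas $\ba\mathbf{1}$ would diverge. I would then prove $L_{w\shp_{\! q} w'}=L_{w}L_{w'}$ by induction on the total degree in $\ba,\bb$, peeling off the leftmost letter and matching the defining recursion of $\shp_{\! q}$ against the operator identities recorded just before the proposition. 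When both words begin with $\ba$, the identity $(\ba f)(\ba g)=\ba((\ba f)g+f(\ba g)+\hbar fg)$ matches the term $\ba((\ba w)\shp_{\! q} w'+w\shp_{\! q}(\ba w')+\hbar\,w\shp_{\! q} w')$ line for line; the cases where a word begins with $\bb$ use $(\bb f)g=\bb(fg)$ against $(\bb w)\shp_{\! q} w'=\bb(w\shp_{\! q} w')$; the base case is $1\shp_{\! q} w=w$ with $L_{1}=\mathbf{1}$.

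For (ii) I would first record the description $\widehat{\mathfrak{H}^{0}}=\mathcal{C}\cdot 1\oplus(\widehat{\mathfrak{H}^{1}}\cap \ba\mathfrak{H})$, i.e. $\widehat{\mathfrak{H}^{0}}$ consists of the constants together with those elements of $\widehat{\mathfrak{H}^{1}}$ that are $\mathcal{C}$-combinations of monomials beginning with $\ba$. This holds because $e_{\overline{1}}=\ba\bb$ and $e_{k}=\ba^{k}\bb+\hbar\,\ba^{k-1}\bb$ $(k\ge 1)$ begin with $\ba$ precisely when $k\neq 1$, and the only $\bb$-leading contribution to $\sum_{\bk}c_{\bk}e_{\bk}$ arises from the terms with $k_{1}=1$ via $\hbar\,\bb\,e_{\bk'}$, which cancels if and only if no such term is present. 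I would then show by the same leftmost-letter induction that $\widehat{\mathfrak{H}^{1}}$ (constants plus monomials ending in $\bb$) is $\shp_{\! q}$-closed, and that being a combination of $\ba$-leading monomials is preserved: shuffling words ending in $\bb$ keeps the final letter $\bb$ since the only contractions, namely the $\hbar$ terms, occur at $\ba$'s and never at the trailing $\bb$, while shuffling two $\ba$-leading words produces $\ba(\cdots)$, hence an $\ba$-leading result. Intersecting the two closure statements, and disposing of the constant via $1\shp_{\! q} w=w$, gives closure of $\widehat{\mathfrak{H}^{0}}$.

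For (iii) I would combine (i) and (ii). Given $w,w'\in\widehat{\mathfrak{H}^{0}}$, part (ii) yields $w\shp_{\! q} w'\in\widehat{\mathfrak{H}^{0}}$, so all three polylogarithms are attached to indices in $\widehat{I}_{0}$ and, by the absolute convergence recorded after \eqref{eq:def-qMZV}, extend to $t=1$ with $L_{u}(1)=Z_{q}(u)$ for $u\in\widehat{\mathfrak{H}^{0}}$; passing to the limit $t\to 1$ in the identity $L_{w\shp_{\! q} w'}(t)=L_{w}(t)L_{w'}(t)$ of part (i) gives $Z_{q}(w\shp_{\! q} w')=Z_{q}(w)Z_{q}(w')$. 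The main obstacle is (i): making precise that $w(\mathbf{1})$ actually lands in the function space $\mathcal{F}$ on which the stated operator identities are valid, so that the induction is legitimate at each step; once this is set up, the $\hbar$-bookkeeping matches the shuffle recursion by construction. In (iii) the only delicate point is the passage to $t=1$, which is justified by Abel's theorem together with the absolute convergence guaranteed by $\bk\in\widehat{I}_{0}$.
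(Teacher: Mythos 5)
The paper does not actually prove this proposition: it is stated as a basic property with a pointer to \cite{CEM, Zhao2}, so the comparison is with the standard argument that the paper's setup anticipates, and your proposal is exactly that argument, correctly executed. The operator identities $(\ba f)(\ba g)=\ba\left((\ba f)g+f(\ba g)+\hbar fg\right)$ and $(\bb f)g=f(\bb g)=\bb(fg)$ are recorded immediately before the proposition precisely so that (i) follows by your leftmost-letter induction, and (ii), (iii) follow as you describe. Two points you flag or use implicitly do need routine verification to make the write-up complete: first, that the space $\mathcal{H}$ is stable under the operators $\ba$ and $\bb$ and contains $\bb\mathbf{1}=t/(1-t)$, so that in the induction for (i) the operator identities are only ever applied to functions $u(\mathbf{1})$ with $u$ a nonempty word ending in $\bb$, hence lying in $\mathcal{H}$ (a short estimate from the decay condition defining $\mathcal{H}$, which also justifies the Fubini rearrangement behind the $\ba$-identity); second, the identification $\widehat{\mathfrak{H}^{1}}=\mathcal{C}\oplus\mathfrak{H}\bb$ underlying your description $\widehat{\mathfrak{H}^{0}}=\mathcal{C}\cdot 1\oplus(\widehat{\mathfrak{H}^{1}}\cap\ba\mathfrak{H})$, which holds because $\hbar$ is invertible in $\mathcal{C}=\mathbb{Q}[\hbar,\hbar^{-1}]$ (via $\bb=\hbar^{-1}(e_{1}-e_{\overline{1}})$ and $\ba^{n}\bb=e_{\overline{n}}$) and because the words form a basis of $\mathfrak{H}$, so the $\bb$-leading terms $\hbar\,\bb\,e_{\bk'}$ can only cancel if their coefficients vanish. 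With these two verifications supplied, your proof is complete and matches what the cited references do.
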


Now we can state the double shuffle relations for multiple harmonic $q$-series, 
which follow from Proposition \ref{prop:stuffle-rel} and Proposition \ref{prop:shuffle-rel} (iii). 

\begin{thm}\label{thm:double-shuffle}
For any $w, w' \in \widehat{\mathfrak{H}^{0}}$ it holds that  
\begin{align*}
Z_{q}(w *_{q}w'-w \shp_{\! q} w')=0.  
\end{align*} 
\end{thm}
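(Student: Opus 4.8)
The plan is to derive the statement directly from the two product-comparison results that immediately precede it, so that the theorem becomes a short formal consequence. The key preliminary observation is that both products preserve the relevant submodule: the stuffle product $*_{q}$ keeps $\widehat{\mathfrak{H}^{0}}$ closed (as noted just after its definition), and by Proposition \ref{prop:shuffle-rel} (ii) the shuffle product $\shp_{\! q}$ does as well. Hence for $w, w' \in \widehat{\mathfrak{H}^{0}}$ both $w *_{q} w'$ and $w \shp_{\! q} w'$ again lie in $\widehat{\mathfrak{H}^{0}}$, and the map $Z_{q}$ may be evaluated on each of them.

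First I would invoke Proposition \ref{prop:stuffle-rel} to obtain $Z_{q}(w *_{q} w') = Z_{q}(w) Z_{q}(w')$, and then Proposition \ref{prop:shuffle-rel} (iii) to obtain $Z_{q}(w \shp_{\! q} w') = Z_{q}(w) Z_{q}(w')$. Since the two right-hand sides coincide, subtracting them and using the $\mathcal{C}$-linearity of $Z_{q}$ yields
\[
Z_{q}(w *_{q} w' - w \shp_{\! q} w') = Z_{q}(w *_{q} w') - Z_{q}(w \shp_{\! q} w') = Z_{q}(w) Z_{q}(w') - Z_{q}(w) Z_{q}(w') = 0,
\]
which is exactly the asserted identity. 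No further case analysis is needed, since both intermediate evaluations are legitimate by the closure remarks above.

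The substantive difficulty is therefore not in this final assembly but in the two propositions themselves, which I am allowed to assume. Concretely, Proposition \ref{prop:stuffle-rel} reflects the harmonic (Cauchy-type) multiplication of the defining series $\zeta_{q}(\bk)$, with the diagonal products $F_{k}(m) F_{l}(m)$ encoded by the auxiliary map $\circ_{q}$; and Proposition \ref{prop:shuffle-rel} rests on the operator realization of the polylogarithms $L_{\bk}$ through the action of $\ba, \bb, \hbar$ on $\mathcal{H}$, together with the boundary identity $L_{\bk}(1) = \zeta_{q}(\bk)$ for $\bk \in \widehat{I}_{0}$. Were those results not in hand, the genuinely hard part would be checking absolute convergence and the commutation rules on $\mathcal{H}$ needed to match $\shp_{\! q}$ with the analytic product; granted them, Theorem \ref{thm:double-shuffle} follows immediately as above.
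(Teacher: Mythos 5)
Your proof is correct and is essentially identical to the paper's: the theorem is stated there as an immediate consequence of Proposition \ref{prop:stuffle-rel} and Proposition \ref{prop:shuffle-rel} (iii), exactly as you argue, with the closure of $\widehat{\mathfrak{H}^{0}}$ under both products justifying the evaluations. Nothing further is needed.
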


%%%%%%%%%%%%%%%%%%%%%%%%%%%%%%%%%%%%%%%%%%%%%%%%%%%%%%%%%%%%%%%%%%%%%%%%%%%%%%%
\section{Derivation relations}\label{sec:derivation} 

\subsection{Algebraic formulas of formal power series}

Denote by $\mathfrak{H}[[X]]$ the non-commutative ring of formal power series in $X$ 
whose coefficients belong to $\mathfrak{H}$. 
We extend the shuffle product on $\mathfrak{H}$ to $\mathfrak{H}[[X]]$ by 
$\mathcal{C}[[X]]$-linearity. 
Then we can define the logarithm and the exponential with respect to the shuffle product by 
\begin{align}
\log_{\shp_{\! q}}{(1+f(X))}=\sum_{n=1}^{\infty}\frac{(-1)^{n-1}}{n}(f(X))^{\shp_{\! q} n}, \quad 
\exp_{\shp_{\! q}}{(f(X))}=\sum_{n=0}^{\infty}\frac{1}{n!}(f(X))^{\shp_{\! q} n} 
\label{eq:def-log-exp}
\end{align}
for $f(X) \in X \mathfrak{H}[[X]]$, where   
$f(X)^{\shp_{\! q} 1}=f(X)$ and 
$f(X)^{\shp_{\! q} (n+1)}=f(X)\shp_{\! q}(f(X)^{\shp_{\! q} n})$ for $n \ge 1$. 
They are inverse to each other. 

Similarly we extend the stuffle product to the ring $\widehat{\mathfrak{H}^{1}}[[X]]$ 
of formal power series over $\widehat{\mathfrak{H}}^{1}$, 
and define the logarithm $\log_{*_{q}}{(1+f(X))}$ and the exponential $\exp_{*_{q}}{(f(X))}$ 
for $f(X)\in X \widehat{\mathfrak{H}^{1}}[[X]]$ by 
\eqref{eq:def-log-exp} with $\shp_{\! q}$ replaced by $*_{q}$.  

Now set 
\begin{align}
& 
\psi(X)=\frac{1}{\hbar}\ba\log{(1+\hbar \bb X)}=
\sum_{n=1}^{\infty}\frac{(-1)^{n-1}}{n}\hbar^{n-1}\ba\bb^{n}X^{n}, 
\label{eq:psi-def} \\ 
& 
\phi(X)=\left(\log{(1+\ba X)}\right)\bb=
\sum_{n=1}^{\infty}\frac{(-1)^{n-1}}{n}\ba^{n}\bb X^{n}. 
\label{eq:phi-def}
\end{align}
Note that they belong to the ideal $X \widehat{\mathfrak{H}^{1}}[[X]]$ because 
$\hbar^{n-1}\ba\bb^{n}=e_{\overline{1}}(e_{1}-e_{\overline{1}})^{n-1}$ and 
$\ba^{n}\bb=e_{\overline{n}}$ for $n \ge 1$.   
We will make use of the following formulas. 

\begin{prop}\label{prop:log-shp-*}
It holds that 
\begin{align}
& 
\psi(X)=\log_{\shp_{\! q}}{\left(\frac{1}{1-e_{\overline{1}}X}\right)}, 
\label{eq:log-shp} \\ 
& 
\phi(X)=\log_{*_{q}}{\left(\frac{1}{1-e_{\overline{1}}X}\right)}. 
\label{eq:log-*}
\end{align} 
\end{prop}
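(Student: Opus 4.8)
The plan is to prove the two identities separately by computing the shuffle and stuffle exponentials of $e_{\overline{1}}X$ and matching them against the explicit power series $\psi(X)$ and $\phi(X)$. For the stuffle identity \eqref{eq:log-*}, I would first observe that $\frac{1}{1-e_{\overline{1}}X}=\exp_{*_{q}}(\phi(X))$ is equivalent to $\frac{1}{1-e_{\overline{1}}X}=\sum_{n\ge 0}\frac{1}{n!}\phi(X)^{*_{q}n}$, so it suffices to show $e_{\overline{1}}^{*_{q}n}=n!\,e_{\overline{n}}X^{n}$-type coefficient relations. The key structural fact is that the stuffle product of copies of $e_{\overline{1}}$ stays inside the subspace spanned by the $e_{\overline{m}}$: using the definition of $*_{q}$ together with the relation $e_{\overline{1}}\circ_{q}e_{\overline{m}}=e_{\overline{m+1}}$ noted just after the definition of $\circ_{q}$, I expect a clean recursion. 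Concretely I would prove by induction that the generating function $\sum_{n\ge 0}e_{\overline{n}}X^{n}=\frac{1}{1-e_{\overline{1}}X}$ (with $e_{\overline{0}}:=1$) is grouplike for $*_{q}$, i.e. that $\exp_{*_{q}}$ of the right side of \eqref{eq:phi-def} telescopes into exactly this series. Equivalently, taking $\log_{*_{q}}$, I would verify that $\log_{*_{q}}\frac{1}{1-e_{\overline{1}}X}=\sum_{n\ge1}\frac{(-1)^{n-1}}{n}(e_{\overline{1}}X)^{*_{q}n}$ collapses term by term to $\sum_{n\ge1}\frac{(-1)^{n-1}}{n}\ba^{n}\bb X^{n}=\phi(X)$, which reduces to the single combinatorial claim that $(e_{\overline{1}})^{*_{q}n}$ expands into the $e_{\overline{m}}$ with the same coefficients that $\log$ and $\exp$ of the scalar series $\frac{1}{1-uX}$ produce.

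For the shuffle identity \eqref{eq:log-shp}, the approach is entirely analogous but now the relevant building block is $e_{\overline{1}}=\ba\bb$ under $\shp_{\! q}$. Here I would use the shuffle rules $(\bb w)\shp_{\! q}w'=w\shp_{\! q}(\bb w')=\bb(w\shp_{\! q}w')$ and the $\ba$-rule with the $\hbar$ correction term to compute $(\ba\bb)^{\shp_{\! q}n}$. Because every $e_{\overline{1}}=\ba\bb$ ends in $\bb$, the right-factoring shuffle rule for $\bb$ lets me pull one $\bb$ out repeatedly; I anticipate that $(\ba\bb X)^{\shp_{\! q}n}$ organizes into a sum of terms of the form (const)$\cdot\hbar^{n-1}\ba\bb^{n}$ after accounting for all the $\hbar$ corrections generated by the $\ba$-shuffles, which is exactly the shape appearing in $\psi(X)=\sum_{n\ge1}\frac{(-1)^{n-1}}{n}\hbar^{n-1}\ba\bb^{n}X^{n}$. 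I would again phrase this as showing $\frac{1}{1-e_{\overline{1}}X}=\exp_{\shp_{\! q}}(\psi(X))$ and verify it by matching coefficients, or equivalently by showing that $\frac{1}{\hbar}\ba\log(1+\hbar\bb X)$ is the shuffle-logarithm of the geometric series.

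The cleanest unified route, which I would attempt first, is to reduce both statements to a one-dimensional (scalar) identity. Introduce a formal scalar series and show that the maps sending $X\mapsto e_{\overline{1}}X$ intertwine the noncommutative shuffle/stuffle structure on the relevant cyclic subspaces with ordinary multiplication of power series in $X$ with coefficients in $\mathcal{C}$; then $\log_{\shp_{\! q}}$ and $\log_{*_{q}}$ of $\frac{1}{1-e_{\overline{1}}X}$ reduce to the ordinary logarithm $-\log(1-X)=\sum_{n\ge1}\frac{1}{n}X^{n}$, and the explicit expressions \eqref{eq:psi-def}, \eqref{eq:phi-def} fall out after substituting the closed forms for $(e_{\overline{1}})^{\shp_{\! q}n}$ and $(e_{\overline{1}})^{*_{q}n}$. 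The main obstacle I foresee is establishing that these powers really do stay within a rank-one (scalar-parametrized) family and computing the two closed forms exactly, including the bookkeeping of the $\hbar$ corrections in the shuffle case and of the $-\hbar$ term in $e_{\overline{1}}\circ_{q}e_{\overline{1}}$ in the stuffle case; getting these correction coefficients to reassemble into the logarithms $\frac{1}{\hbar}\log(1+\hbar\bb X)$ and $\log(1+\ba X)$ respectively is the crux, and I would verify it by a careful induction on $n$ rather than trusting the analogy with the classical ($\hbar=0$) case.
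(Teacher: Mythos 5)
Your high-level instinct --- prove the two identities by expanding the $\shp_{\! q}$- and $*_{q}$-exponentials and matching coefficients, rather than by the paper's method of differentiating in $X$ --- is a legitimate starting point, but the concrete structural claims your plan rests on are false, and they are exactly the claims you single out as the crux. Under the stuffle product, powers of $e_{\overline{1}}$ do \emph{not} stay inside the span of the letters $e_{\overline{m}}$: already $e_{\overline{1}}*_{q}e_{\overline{1}}=2e_{\overline{1}}e_{\overline{1}}+e_{\overline{2}}$, which contains the depth-two concatenation word $e_{\overline{1}}e_{\overline{1}}$. Likewise $(\ba\bb)\shp_{\! q}(\ba\bb)=2\ba\bb\ba\bb+\hbar\,\ba\bb^{2}$, so $(\ba\bb)^{\shp_{\! q}n}$ is not of the form $(\mathrm{const})\cdot\hbar^{n-1}\ba\bb^{n}$. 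In general the $n$-th power under either product spreads over words of depth up to $n$, so no ``rank-one (scalar-parametrized) family'' contains them; it is only in the \emph{logarithm} that all higher-depth words cancel, and proving precisely that cancellation is the content of the proposition, so assuming the powers already collapse begs the question. Your intermediate identity $\sum_{n\ge 0}e_{\overline{n}}X^{n}=\frac{1}{1-e_{\overline{1}}X}$ is also false: the left-hand side is $\sum_{n}\ba^{n}\bb\,X^{n}$ while the right-hand side is $\sum_{n}(\ba\bb)^{n}X^{n}$, and $\ba^{2}\bb\neq\ba\bb\ba\bb$.

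The scalar reduction fails for a structural reason, not a bookkeeping one: $\frac{1}{1-e_{\overline{1}}X}$ is a geometric series with respect to \emph{concatenation}, while the logarithms are taken with respect to $*_{q}$ and $\shp_{\! q}$, so the proposition genuinely intertwines two different products; a homomorphism from a scalar power-series ring can transport only one of them (under $u\mapsto e_{\overline{1}}$ as a map of $*_{q}$-algebras, $\frac{1}{1-uX}$ maps to $\sum_{n}e_{\overline{1}}^{*_{q}n}X^{n}$, which is not $\frac{1}{1-e_{\overline{1}}X}$). A corrected version of your transfer idea does exist on the stuffle side: the span of words in the letters $e_{\overline{n}}$ is closed under $*_{q}$, and since $e_{\overline{m}}\circ_{q}e_{\overline{n}}=e_{\overline{m+n}}$ (an easy computation from \eqref{eq:def-enbar}) it is isomorphic as a quasi-shuffle algebra to the classical harmonic algebra, under which \eqref{eq:log-*} becomes Newton's identity expressing elementary symmetric functions through power sums. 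But that identity itself still requires proof, and its standard proof is logarithmic differentiation --- which is exactly the paper's route: one verifies $\frac{d}{dX}\frac{1}{1-e_{\overline{1}}X}=\frac{1}{1-e_{\overline{1}}X}*_{q}\frac{d}{dX}\phi(X)$ by a telescoping recursion built from $e_{\overline{1}}\circ_{q}e_{\overline{n}}=e_{\overline{n+1}}$, and argues analogously for $\shp_{\! q}$ (where \eqref{eq:log-shp} is an instance of Spitzer's identity for the weight-$\hbar$ Rota--Baxter operator $\ba$). To salvage a coefficient-matching proof you would have to establish these Newton/Spitzer-type cancellations by induction on depth; as written, your induction has nothing correct to induct on.
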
  

\begin{proof}
First we prove \eqref{eq:log-shp}. 
Since the both sides belong to $X\mathfrak{H}[[X]]$, 
it suffices to show that the derivatives with respect to $X$ are equal, that is,   
\begin{align}
\frac{d}{dX}\left(\frac{1}{1-e_{\overline{1}}X}\right)=
\frac{1}{1-e_{\overline{1}}X}\shp_{\! q} \left(\ba\bb\frac{1}{1+\hbar \bb X}\right). 
\label{eq:proof-log-shp}
\end{align}
Rewrite the right hand side as follows. 
\begin{align*}
& 
\left(1+\ba\bb X\frac{1}{1-e_{\overline{1}}X}\right)\shp_{\! q} \left(\ba\bb\frac{1}{1+\hbar \bb X}\right) \\ 
&=\ba\bb\frac{1}{1+\hbar \bb X}+\ba \bb X\left\{ 
\frac{1}{1-e_{\overline{1}}X} \shp_{\! q} \left( \ba\bb \frac{1}{1+\hbar \bb X} \right)+
\frac{1}{1+\hbar \bb X}(\ba+\hbar)\bb \frac{1}{1-e_{\overline{1}}X}
\right\}.   
\end{align*}
Thus we find that 
\begin{align*}
(1-\ba \bb X)\left\{\frac{1}{1-e_{\overline{1}}X}\shp_{\! q} \left(\ba\bb\frac{1}{1+\hbar \bb X}\right) \right\}
&=\ba\bb\frac{1}{1+\hbar \bb X}
\left\{ 
1+(\ba+\hbar)\bb\frac{X}{1-e_{\overline{1}}X}
\right\} \\ 
&=\frac{e_{\overline{1}}}{1-e_{\overline{1}}X}. 
\end{align*}
This completes the proof of \eqref{eq:proof-log-shp}. 

The formula \eqref{eq:log-*} is proved in much the same way. 
It suffices to prove that 
\begin{align*}
\frac{d}{dX}\left(\frac{1}{1-e_{\overline{1}}X}\right)=
\frac{1}{1-e_{\overline{1}}X}*_{q}\frac{d}{dX}\phi(X).  
\end{align*}
The right hand side is equal to 
\begin{align*}
\sum_{n=1}^{\infty}(-X)^{n-1}\,  
\frac{1}{1-e_{\overline{1}}X}*_{q}e_{\overline{n}}. 
\end{align*}
For $n \ge 1$ it holds that 
\begin{align}
& 
\frac{1}{1-e_{\overline{1}}X}*_{q}e_{\overline{n}}=
\left(1+e_{\overline{1}}X\frac{1}{1-e_{\overline{1}}X}\right)*_{q}e_{\overline{n}}  
\label{eq:proof-log-*} \\ 
&=e_{\overline{n}}+X\left\{ 
e_{\overline{1}}\left( \frac{1}{1-e_{\overline{1}}X}*_{q}e_{\overline{n}} \right)+
e_{\overline{n}}e_{\overline{1}}\frac{1}{1-e_{\overline{1}}X}+(e_{\overline{1}}\circ_{q} e_{\overline{n}})
\frac{1}{1-e_{\overline{1}}X}
\right\}.
\nonumber 
\end{align}
Since $e_{\overline{1}}\circ_{q} e_{\overline{n}}=e_{\overline{n+1}}$ we see that 
\begin{align*}
(1-e_{\overline{1}}X) \left(\frac{1}{1-e_{\overline{1}}X}*_{q}e_{\overline{n}}\right)&=
e_{\overline{n}}+(Xe_{\overline{n}}e_{\overline{1}}+e_{\overline{n+1}})\frac{1}{1-e_{\overline{1}}X} \\ 
&=(e_{\overline{n}}+Xe_{\overline{n+1}})\frac{1}{1-e_{\overline{1}}X}. 
\end{align*}
Therefore 
\begin{align*}
\frac{1}{1-e_{\overline{1}}X}*_{q}\frac{d}{dX}\phi(X)&=
\frac{1}{1-e_{\overline{1}}X}\left( 
\sum_{n=1}^{\infty}(-X)^{n-1}(e_{\overline{n}}+Xe_{\overline{n+1}})
\right) 
\frac{1}{1-e_{\overline{1}}X} \\ 
&=\frac{e_{\overline{1}}}{(1-e_{\overline{1}}X)^{2}}=
\frac{d}{dX}\left(\frac{1}{1-e_{\overline{1}}X}\right). 
\end{align*}
\end{proof}

%%%%%%%%%%%%%%%%%%%%%%%%%%%%%%%%%%%%%%%%
\subsection{Derivations} 

Here we define three derivations $\delta_{n}, d_{n}$ and $\partial_{n} \, (n \ge 1)$ 
on $\mathfrak{H}$, and prove a relation among them. 
They are regarded as counterparts of the derivations on the algebra of multiple zeta values 
due to Ihara, Kaneko and Zagier \cite{IKZ}. 

\begin{defn}
Let $\delta_{n}:\mathfrak{H} \to \mathfrak{H} \, (n\ge 1)$ be 
the $\mathcal{C}$-linear derivation uniquely determined by 
\begin{align*}
\delta_{n}(\ba)=0, \qquad 
\delta_{n}(\bb)=\frac{(-1)^{n-1}}{n}(\bb+1)\ba^{n}\bb. 
\end{align*}
We define the $\mathcal{C}$-algebra homomorphism 
$\Phi_{X}: \mathfrak{H} \to \mathfrak{H}[[X]]$ by 
\begin{align*}
\Phi_{X}=\exp{\left(\sum_{n=1}^{\infty}X^{n}\delta_{n}\right)}.  
\end{align*}
\end{defn}

We prove another representation of $\Phi_{X}$ on the subalgebra $\widehat{\mathfrak{H}^{1}}$. 

\begin{lem}
Set 
\begin{align}
D^{*}_{X}=\sum_{n=1}^{\infty}X^{n}\delta_{n}.  
\label{eq:def-DX}
\end{align}
For any $w \in \widehat{\mathfrak{H}^{1}}$ it holds that 
\begin{align*}
D^{*}_{X}(w)=\phi(X)*_{q}w-\phi(X)w,  
\end{align*}
where $\phi(X) \in X\widehat{\mathfrak{H}^{1}}[[X]]$ is given by \eqref{eq:phi-def}. 
\end{lem}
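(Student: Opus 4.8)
The plan is to prove the operator identity
\begin{align*}
D^{*}_{X}(w)=\phi(X)*_{q}w-\phi(X)w
\end{align*}
by induction on the length $r$ of the index, since both sides are $\mathcal{C}$-linear and every element of $\widehat{\mathfrak{H}^{1}}$ is a $\mathcal{C}$-linear combination of monomials $e_{\bk}=e_{k_{1}}\cdots e_{k_{r}}$. The key observation is that the right-hand side defines a map that behaves almost like a derivation but with the ordinary product on one side: writing $T(w)=\phi(X)*_{q}w-\phi(X)w$, the stuffle recursion
\begin{align*}
(e_{k}w)*_{q}(e_{l}w')=e_{k}(w*_{q}e_{l}w')+e_{l}(e_{k}w*_{q}w')+(e_{k}\circ_{q}e_{l})(w*_{q}w')
\end{align*}
lets me peel off the leading letter of $w=e_{k_{1}}w'$ and compare $T(e_{k_{1}}w')$ against $e_{k_{1}}T(w')$ plus a correction coming from the first term of $\phi(X)$. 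So first I would establish the base case $w=1$, where both sides vanish, and the one-letter case $w=e_{k}$, checking directly that $D^{*}_{X}(e_{k})$ matches $\phi(X)*_{q}e_{k}-\phi(X)e_{k}$.

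Next I would reduce the problem to a \emph{single-letter Leibniz rule}. Because $\Phi_{X}=\exp(D^{*}_{X})$ is a $\mathcal{C}$-algebra homomorphism for the \emph{concatenation} product, the operator $D^{*}_{X}=\sum_{n\ge1}X^{n}\delta_{n}$ is itself a derivation for concatenation. Hence it suffices to compute $D^{*}_{X}$ on the generators $e_{k}$ and verify that the right-hand side $T$ obeys the matching twisted Leibniz rule for concatenation, namely
\begin{align*}
T(e_{k}w')=e_{k}\,T(w')+\bigl(\text{leading-letter term of }\phi(X)*_{q}e_{k}\bigr)\,w'.
\end{align*}
Expanding $\phi(X)*_{q}(e_{k}w')$ with the stuffle recursion, the term $e_{k}(\phi(X)*_{q}w')$ combines with $-\phi(X)e_{k}w'$ to reproduce $e_{k}\,T(w')$, while the remaining two terms $\phi\text{-leading}\cdot(e_{k}w'\ast\cdots)$ and the $\circ_{q}$ term collapse against the subtracted $\phi(X)e_{k}w'$ after summing the geometric-type series in $\phi(X)=\sum_{n\ge1}\frac{(-1)^{n-1}}{n}e_{\overline{n}}X^{n}$. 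The identity $\ba^{n}\bb=e_{\overline{n}}$ from \eqref{eq:def-enbar} and the relation $e_{\overline{1}}\circ_{q}e_{\overline{n}}=e_{\overline{n+1}}$ are exactly what make these sums telescope; I expect the computation in Proposition \ref{prop:log-shp-*}, especially the manipulation in \eqref{eq:proof-log-*}, to be reusable almost verbatim here.

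The main obstacle will be matching the precise form of $\delta_{n}(\bb)=\frac{(-1)^{n-1}}{n}(\bb+1)\ba^{n}\bb$ against the stuffle-minus-concatenation expression, since $\delta_{n}$ is defined by its action on the letters $\ba,\bb$ whereas $\phi(X)$ lives in the generators $e_{k}$. Concretely, I must check that applying the concatenation-derivation $D^{*}_{X}$ to $e_{k}=\ba^{k-1}(\ba+\hbar)\bb$ via $\delta_{n}(\ba)=0$ reproduces the one-letter value of $T$; here the factor $(\bb+1)$ in $\delta_{n}(\bb)$ is what accounts for both the stuffle term (the $\bb$ part) and the concatenation subtraction (the $1$ part). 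The cleanest route is to verify the generating-function identity $D^{*}_{X}(e_{k})=T(e_{k})$ directly from the definitions and then invoke the derivation property of $D^{*}_{X}$ together with the twisted Leibniz rule for $T$ to conclude for all $w$ by induction on length.
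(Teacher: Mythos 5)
Your proposal is correct and is essentially the paper's own argument: the paper introduces $\delta_{n}'(w)=\frac{(-1)^{n-1}}{n}\left(e_{\overline{n}}*_{q}w-e_{\overline{n}}w\right)$, verifies by direct calculation that it is a derivation for the concatenation product and that $\delta_{n}'(e_{k})=\delta_{n}(e_{k})$ for all $k\in\widehat{\mathbb{N}}$, and concludes $\delta_{n}'=\delta_{n}$ on $\widehat{\mathfrak{H}^{1}}$ since that subalgebra is freely generated by the $e_{k}$ --- precisely your Leibniz-rule-plus-generator-check-plus-induction-on-length scheme, merely organized per $n$ rather than through the generating function $\phi(X)$. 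The one cosmetic slip is your inference that $D^{*}_{X}$ is a concatenation derivation because $\Phi_{X}$ is a homomorphism; the implication runs the other way, but the needed fact is immediate anyway since each $\delta_{n}$ is a derivation by definition.
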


\begin{proof}
Define the map $\delta_{n}': \widehat{\mathfrak{H}^{1}} \to \widehat{\mathfrak{H}^{1}}$ by 
\begin{align*}
\delta_{n}'(w)=\frac{(-1)^{n-1}}{n}\left(e_{\overline{n}}*_{q}w-e_{\overline{n}}w\right).  
\end{align*} 
By direct calculation we see that it is a $\mathcal{C}$-linear derivation and  
$\delta_{n}'(e_{k})=\delta_{n}(e_{k})$ for $k \in \widehat{\mathbb{N}}$. 
Hence $\delta_{n}'=\delta_{n}$ on the subalgebra $\widehat{\mathfrak{H}^{1}}$. 
Using $\phi(X)=\sum_{n=1}^{\infty}(-1)^{n-1}e_{\overline{n}}X^{n}/n$, we get the desired formula. 
\end{proof}

\begin{prop}\label{prop:Phi-*-rewrite}
For $w \in \widehat{\mathfrak{H}^{1}}$, it holds that 
\begin{align*}
\Phi_{X}(w)=(1-e_{\overline{1}}X)\left(\frac{1}{1-e_{\overline{1}}X}*_{q}w \right).  
\end{align*}
\end{prop}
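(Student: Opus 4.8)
The plan is to realize both sides of the claimed identity as the value at $s=1$ of a one-parameter family indexed by a formal scalar $s$, and to show that the two families solve the same first-order differential equation in $s$ with the same initial condition. Write $\phi = \phi(X)$ and set $D = D^{*}_{X} = \sum_{n\ge 1}X^{n}\delta_{n}$, so that $\Phi_{X} = \exp(D)$ by \eqref{eq:def-DX}. Introduce $\Theta_{s} = \exp(sD)$ and $F_{s} = \exp_{*_{q}}(s\phi) \in \widehat{\mathfrak{H}^{1}}[[X]]$, and define $\Xi_{s}(w) = F_{s}^{-1}(F_{s}*_{q}w)$, where $F_{s}^{-1}$ denotes the inverse of $F_{s}$ with respect to concatenation (which exists because $F_{s}$ has constant term $1$). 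All of $\exp(sD)$, $F_{s}$, $F_{s}^{-1}$ and the products involved make sense as formal power series in $X$, since $D$ raises the $X$-degree and $\phi \in X\widehat{\mathfrak{H}^{1}}[[X]]$. By Proposition \ref{prop:log-shp-*} we have $F_{1} = \exp_{*_{q}}(\phi) = 1/(1-e_{\overline{1}}X)$, so $\Xi_{1}(w)$ is exactly the right-hand side of the Proposition, while $\Theta_{1} = \Phi_{X}$; thus it suffices to prove $\Xi_{s} = \Theta_{s}$, and I will do this by matching differential equations.

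Both families start at the identity: $\Theta_{0} = \Xi_{0} = \mathrm{id}$ (as $F_{0}=1$). The family $\Theta_{s}$ obviously satisfies $\tfrac{d}{ds}\Theta_{s} = D\circ\Theta_{s}$. For $\Xi_{s}$ I would differentiate directly: since $*_{q}$ is commutative one gets $\tfrac{d}{ds}F_{s} = \phi *_{q} F_{s}$, and differentiating the concatenation inverse gives $\tfrac{d}{ds}F_{s}^{-1} = -F_{s}^{-1}(\phi *_{q} F_{s})F_{s}^{-1}$. Plugging these in and writing $g = \Xi_{s}(w)$, so that $F_{s}*_{q}w = F_{s}\,g$ (concatenation), the derivative collapses to
\[
\tfrac{d}{ds}\Xi_{s}(w) = F_{s}^{-1}\big(\phi *_{q} (F_{s}\,g) - (\phi *_{q} F_{s})\,g\big).
\]
The key step is to recognize that the bracket equals $F_{s}\,(\phi *_{q} g - \phi\,g) = F_{s}\,D(g)$. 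This is precisely the Leibniz rule $D(F_{s}g) = D(F_{s})g + F_{s}D(g)$ for $D$, rewritten using $D(u) = \phi *_{q} u - \phi u$ from the preceding Lemma; and $D$ is a derivation for concatenation because it is the sum $\sum_{n}X^{n}\delta_{n}$ of the derivations $\delta_{n}$. Hence $\tfrac{d}{ds}\Xi_{s}(w) = F_{s}^{-1}F_{s}\,D(g) = D(\Xi_{s}(w))$, so $\Xi_{s}$ satisfies the same equation as $\Theta_{s}$.

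Finally, uniqueness of solutions of $\tfrac{d}{ds}Y_{s} = D\circ Y_{s}$ with $Y_{0} = \mathrm{id}$ in the $X$-adically complete setting (solve coefficient-by-coefficient in $s$, noting each application of $D$ strictly raises the $X$-degree) forces $\Xi_{s} = \Theta_{s}$ for all $s$. Setting $s=1$ yields $\Phi_{X}(w) = \Theta_{1}(w) = \Xi_{1}(w) = (1-e_{\overline{1}}X)\big(\tfrac{1}{1-e_{\overline{1}}X}*_{q}w\big)$, as claimed. The main obstacle is the middle step: carrying out the differentiation of $\Xi_{s}$ cleanly and identifying the resulting cancellation as exactly the statement that $D$ is a concatenation-derivation. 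Everything else is formal bookkeeping in the power-series variable $X$ and the auxiliary parameter $s$.
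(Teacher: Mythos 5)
Your proof is correct, but it takes a genuinely different route from the paper's. The paper argues by multiplicativity: both $\Phi_{X}$ (being the exponential of a derivation) and the map $\varphi_{X}(w)=(1-e_{\overline{1}}X)\bigl(\frac{1}{1-e_{\overline{1}}X}*_{q}w\bigr)$ are homomorphisms for the concatenation product on $\widehat{\mathfrak{H}^{1}}$ --- the latter via a stuffle computation parallel to \eqref{eq:proof-log-*} --- so it suffices to compare them on the generators $e_{k}$; this comparison is carried out by proving $(D^{*}_{X})^{s}(\bb)=(\bb+1)(\phi(X))^{*s}$ by induction and invoking \eqref{eq:log-*} to obtain the closed formulas \eqref{eq:proof-Phi-*-rewrite2} for $\Phi_{X}(\ba)$ and $\Phi_{X}(\bb)$. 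You instead prove the operator identity $\exp(D^{*}_{X})(w)=F^{-1}(F*_{q}w)$ with $F=\exp_{*_{q}}(\phi(X))$ by a one-parameter (ODE in $s$) argument: both families $\Theta_{s}=\exp(sD^{*}_{X})$ and $\Xi_{s}(w)=F_{s}^{-1}(F_{s}*_{q}w)$ solve $\frac{d}{ds}Y_{s}=D^{*}_{X}\circ Y_{s}$ with $Y_{0}=\mathrm{id}$, the key cancellation being exactly the Leibniz rule for the concatenation-derivation $D^{*}_{X}$ combined with the paper's preceding Lemma $D^{*}_{X}(u)=\phi(X)*_{q}u-\phi(X)u$; Proposition \ref{prop:log-shp-*} then enters only at the very end, through \eqref{eq:log-*}, to identify $F_{1}=\frac{1}{1-e_{\overline{1}}X}$. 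Your route is more conceptual: it avoids all generator computations and isolates the abstract fact that exponentiating the derivation $u\mapsto\phi*_{q}u-\phi u$ yields concatenation-conjugated stuffle multiplication by $\exp_{*_{q}}(\phi)$. What it does not produce are the explicit formulas \eqref{eq:proof-Phi-*-rewrite2}, which the paper reuses later in the proof of Theorem \ref{thm:Delta}, so with your proof those would need a separate derivation. Two points you gesture at should be recorded explicitly for completeness: the Leibniz identification requires the Lemma extended $\mathcal{C}[[X]][[s]]$-linearly and applied to $F_{s}$, to $g=\Xi_{s}(w)$, and to $F_{s}g=F_{s}*_{q}w$, all of which indeed lie in $\widehat{\mathfrak{H}^{1}}[[X]][[s]]$; and evaluation at $s=1$ is legitimate because the coefficient of $s^{k}$ in each of $\Theta_{s}(w)$, $F_{s}$, $F_{s}^{-1}$ and $\Xi_{s}(w)$ has $X$-order at least $k$, so all relevant sums converge $X$-adically and evaluation commutes with products and inverses. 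With those details filled in, the argument is complete.
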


\begin{proof}
Denote by $\varphi_{X}$ the $\mathcal{C}$-linear map given in the right hand side above, that is,   
\begin{align*}
\varphi_{X}(w)=(1-e_{\overline{1}}X)\left(\frac{1}{1-e_{\overline{1}}X}*_{q}w \right).   
\end{align*}
By a similar calculation to that in the proof of Proposition \ref{prop:log-shp-*}
(see \eqref{eq:proof-log-*}), we find that 
\begin{align}
\varphi_{X}(e_{k})=(e_{k}+(e_{\overline{1}}\circ_{q} e_{k})X)\frac{1}{1-e_{\overline{1}}X} 
\label{eq:proof-Phi-*-rewrite}
\end{align}
and $\varphi_{X}(e_{k}w)=\varphi_{X}(e_{k})\varphi_{X}(w)$ for 
$k \in \widehat{\mathbb{N}}$ and $w \in \widehat{\mathfrak{H}^{1}}$. 
Hence $\varphi_{X}(ww')=\varphi_{X}(w)\varphi_{X}(w')$ for any $w, w' \in \widehat{\mathfrak{H}^{1}}$, 
and it suffices to show 
$\Phi_{X}(e_{k})=\varphi_{X}(e_{k})$ for any $k \in \widehat{\mathbb{N}}$. 

Let $D^{*}_{X}: \mathfrak{H}[[X]]\to \mathfrak{H}[[X]]$ be the operator defined by \eqref{eq:def-DX}, 
where $\delta_{n}$ is extended to $\mathfrak{H}[[X]]$ by $\mathcal{C}[[X]]$-linearity. 
Then $D^{*}_{X}(\ba)=0$ and, by induction on $s \ge 1$, we see that 
\begin{align*}
(D^{*}_{X})^{s}(\bb)=(\bb+1)\left(\phi(X)\right)^{*s},   
\end{align*}
where $\phi(X)$ is given by \eqref{eq:phi-def}. 
{}From the definition of $\Phi_{X}$ and \eqref{eq:log-*}, 
we find that 
\begin{align}
\Phi_{X}(\ba)=\ba, \quad 
\Phi_{X}(\bb)=(\bb+1)\exp_{*}(\phi(X))-1=(1+\ba X)\bb\frac{1}{1-\ba\bb X}.  
\label{eq:proof-Phi-*-rewrite2}
\end{align}
Using the above formulas, we see that $\Phi_{X}(e_{k})$ is equal to 
the right hand side of \eqref{eq:proof-Phi-*-rewrite} for any $k \in \widehat{\mathbb{N}}$.  
\end{proof}

Next we consider derivations associated with the shuffle product $\shp_{\! q}$. 

\begin{defn}
Let $d_{n}:\mathfrak{H}[[X]] \to \mathfrak{H}[[X]] \, (n \ge 1)$ be 
the $\mathcal{C}[[X]]$-linear derivation defined by 
\begin{align*}
d_{n}(w)=\frac{(-\hbar)^{n-1}}{n}
\left\{(\ba\bb^{n})\shp_{\! q} w-\ba\bb^{n} w \right\}. 
\end{align*}
We define the $\mathcal{C}[[X]]$-algebra homomorphism 
$\Psi_{X}: \mathfrak{H}[[X]] \to \mathfrak{H}[[X]]$ by 
\begin{align*}
\Psi_{X}=\exp{\left(\sum_{n=1}^{\infty}X^{n}d_{n}\right)}.  
\end{align*}
\end{defn}

As we will see below the operator $\Psi_{X}$ also has another representation, 
but this time on $\mathfrak{H}[[X]]$.  

\begin{lem}\label{lem:DX-shp}
Set 
\begin{align*}
D_{X}^{\shp}=\sum_{n=1}^{\infty}X^{n}d_{n}.  
\end{align*} 
Define $\rho_{s}(X) \in \widehat{\mathfrak{H}^{1}}[[X]] \, (s \ge 1)$ by the recurrence relation 
\begin{align*}
& 
\rho_{1}(X)=1, \\ 
& 
\rho_{s+1}(X)=\left(\psi(X)+\log{(1+\hbar\bb X)}\right)\rho_{s}(X)+\psi(X)\shp_{\! q} \rho_{s}(X) 
\quad (s \ge 1),   
\end{align*}
where $\psi(X)$ is given by \eqref{eq:psi-def}. 
\begin{enumerate}
 \item For $s \ge 1$ and $w \in \mathfrak{H}[[X]]$ it holds that 
\begin{align*}
(D_{X}^{\shp})^{s}(w)=\left(\psi(X)\rho_{s}(X)\right)\shp_{\! q} w-\psi(X)\left(\rho_{s}(X)\shp_{\! q} w\right).  
\end{align*}
 \item For $s \ge 1$ it holds that $\psi(X)\rho_{s}(X)=\left(\psi(X)\right)^{\shp_{\! q} s}$. 
\end{enumerate}
\end{lem}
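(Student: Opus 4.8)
The plan is to prove both parts by induction on $s$, after first putting $D_X^{\shp}$ into a usable shape. Write $\psi=\psi(X)$ and $\rho_s=\rho_s(X)$. Expanding $\psi=\sum_{n\ge1}\frac{(-\hbar)^{n-1}}{n}\ba\bb^{n}X^{n}$ and summing the definition of $d_n$ against $X^n$ gives directly, for \emph{every} $z\in\mathfrak{H}[[X]]$,
\[
D_X^{\shp}(z)=\psi\shp_{\! q}z-\psi z ,
\]
which is the $s=1$ case of (i), since $\rho_1=1$. It is then convenient to work with the $\mathcal{C}[[X]]$-linear operators $S_u(z)=u\shp_{\! q}z$ and $L_u(z)=uz$, so that $D_X^{\shp}=S_\psi-L_\psi$, and assertion (i) becomes the operator identity $(S_\psi-L_\psi)^{s}=S_{\psi\rho_s}-L_\psi S_{\rho_s}$.

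Everything hinges on a single algebraic identity. Factor $\psi=\ba\mu$, where $\mu=\tfrac1\hbar\log(1+\hbar\bb X)$ is a power series in $\bb$ alone and $\log(1+\hbar\bb X)=\hbar\mu$. The first point is that shuffling by a pure-$\bb$ series coincides with concatenating it: iterating $(\bb w)\shp_{\! q}w'=\bb(w\shp_{\! q}w')$ gives $\bb^{k}\shp_{\! q}z=\bb^{k}z$, hence $\mu\shp_{\! q}z=\mu z$ and, by associativity of $\shp_{\! q}$, $(\mu y)\shp_{\! q}z=\mu(y\shp_{\! q}z)$ for all $y,z$. Feeding these into the rule $(\ba w)\shp_{\! q}(\ba w')=\ba((\ba w)\shp_{\! q}w'+w\shp_{\! q}(\ba w')+\hbar\,w\shp_{\! q}w')$ applied to $\psi\shp_{\! q}(\psi u)=(\ba\mu)\shp_{\! q}(\ba\mu u)$ will yield
\[
\psi\shp_{\! q}(\psi u)=\psi\psi u+\psi(\psi\shp_{\! q}u)+\psi\log(1+\hbar\bb X)\,u
\qquad(u\in\mathfrak{H}[[X]]).
\]
Proving this identity---keeping track of the $\hbar$-correction in the $\ba$-shuffle rule and of the pure-$\bb$ simplifications---is the main obstacle; once it is in hand the inductions are bookkeeping.

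Granting the identity, part (ii) follows at once: taking $u=\rho_s$ and comparing with the recursion $\rho_{s+1}=(\psi+\log(1+\hbar\bb X))\rho_s+\psi\shp_{\! q}\rho_s$ shows $\psi\rho_{s+1}=\psi\shp_{\! q}(\psi\rho_s)$, so induction from $\psi\rho_1=\psi=(\psi)^{\shp_{\! q}1}$ gives $\psi\rho_s=(\psi)^{\shp_{\! q}s}$. For part (i) I expand $(S_\psi-L_\psi)(S_{\psi\rho_s}-L_\psi S_{\rho_s})$; associativity turns $S_\psi S_{\psi\rho_s}$ into $S_{\psi\shp_{\! q}(\psi\rho_s)}$ and $L_\psi L_\psi$ into $L_{\psi\psi}$, while the displayed identity rewrites the cross term $S_\psi L_\psi$. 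The pure-$\bb$ fact also gives $S_{\log(1+\hbar\bb X)\rho_s}=L_{\log(1+\hbar\bb X)}S_{\rho_s}$, so expanding $S_{\psi\rho_{s+1}}-L_\psi S_{\rho_{s+1}}$ through the recursion (and using $\psi\rho_{s+1}=\psi\shp_{\! q}(\psi\rho_s)$ from part (ii)) produces exactly the same collection of terms: the $L_{\psi\psi}S_{\rho_s}$ contributions cancel and the four surviving terms coincide, closing the induction.
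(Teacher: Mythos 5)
Your proposal is correct: the key identity $\psi\shp_{\! q}(\psi u)=\psi\psi u+\psi(\psi\shp_{\! q}u)+\psi\log(1+\hbar\bb X)\,u$ does follow exactly as you indicate from the $\ba$-shuffle rule together with the pure-$\bb$ facts $\mu\shp_{\! q}z=\mu z$ and $(\mu y)\shp_{\! q}z=\mu(y\shp_{\! q}z)$, and with it both inductions close as you describe (part (ii) directly from the recursion, part (i) by the operator expansion with the $L_{\psi\psi}S_{\rho_s}$ terms cancelling). The paper's own proof is simply ``By induction on $s$,'' so your argument is that same induction with all the omitted details supplied.
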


\begin{proof}
By induction on $s$.  
\end{proof}

\begin{prop}\label{prop:Phi-shp-rewrite}
For $w \in \mathfrak{H}[[X]]$, it holds that 
\begin{align*} 
\Psi_{X}(w)=(1-e_{\overline{1}}X)\left(\frac{1}{1-e_{\overline{1}}X} \shp_{\! q} w \right).  
\end{align*}
\end{prop}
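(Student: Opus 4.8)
The plan is to expand the exponential $\Psi_X=\exp\bigl(\sum_{n\ge1}X^{n}d_{n}\bigr)=\sum_{s\ge0}\frac{1}{s!}(D_X^{\shp})^{s}$ and evaluate it on $w$ with the help of Lemma~\ref{lem:DX-shp}. Combining parts (i) and (ii) of that lemma, for $s\ge1$ one has $(D_X^{\shp})^{s}(w)=\bigl(\psi(X)\bigr)^{\shp_{\! q} s}\shp_{\! q} w-\psi(X)\bigl(\rho_{s}(X)\shp_{\! q} w\bigr)$. Since $\rho_{s}(X)\in X^{s-1}\widehat{\mathfrak{H}^{1}}[[X]]$ (immediate from its recursion), the element $R(X):=\sum_{s\ge1}\frac{1}{s!}\rho_{s}(X)$ is a well-defined member of $\widehat{\mathfrak{H}^{1}}[[X]]$ and the sum defining $\Psi_X(w)$ converges $X$-adically, giving
\[
\Psi_X(w)=w+\Bigl(\exp_{\shp_{\! q}}\!\bigl(\psi(X)\bigr)-1\Bigr)\shp_{\! q} w-\psi(X)\bigl(R(X)\shp_{\! q} w\bigr).
\]
By \eqref{eq:log-shp} we have $\exp_{\shp_{\! q}}(\psi(X))=\tfrac{1}{1-e_{\overline{1}}X}$, so the first two terms collapse to $\tfrac{1}{1-e_{\overline{1}}X}\shp_{\! q} w$. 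As the target right-hand side equals $\tfrac{1}{1-e_{\overline{1}}X}\shp_{\! q} w-e_{\overline{1}}X\bigl(\tfrac{1}{1-e_{\overline{1}}X}\shp_{\! q} w\bigr)$, the whole proposition reduces to the single identity
\[
\psi(X)\bigl(R(X)\shp_{\! q} w\bigr)=e_{\overline{1}}X\Bigl(\tfrac{1}{1-e_{\overline{1}}X}\shp_{\! q} w\Bigr).
\]

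I expect this last identity to be the heart of the matter. To attack it I would first record the concatenation identity obtained by summing $\psi(X)\rho_{s}(X)=(\psi(X))^{\shp_{\! q} s}$ over $s$, namely $\psi(X)R(X)=\exp_{\shp_{\! q}}(\psi(X))-1=\tfrac{1}{1-e_{\overline{1}}X}-1=e_{\overline{1}}X\tfrac{1}{1-e_{\overline{1}}X}$. Writing $\psi(X)=\frac{1}{\hbar}\ba\log(1+\hbar\bb X)$ and $e_{\overline{1}}=\ba\bb$, and using that left multiplication by $\ba$ is injective in the free algebra $\mathfrak{H}[[X]]$, I can cancel the leading $\ba$ to obtain the cleaner relation $\log(1+\hbar\bb X)\,R(X)=\hbar\,\bb X\tfrac{1}{1-e_{\overline{1}}X}$.

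It then remains to interchange the shuffle by $w$ with left concatenation. The crucial structural observation is that left multiplication by $\bb$ commutes with $\shp_{\! q} w$: iterating the defining rule $(\bb v)\shp_{\! q} w=\bb(v\shp_{\! q} w)$ gives $(\bb^{n}v)\shp_{\! q} w=\bb^{n}(v\shp_{\! q} w)$ for every $n\ge0$ and every $w$. Because $\log(1+\hbar\bb X)$ is a power series in $\bb$ (with coefficients in $\mathcal{C}[[X]]$), this observation, together with bilinearity of $\shp_{\! q}$, lets the whole operator pass through the shuffle,
\[
\log(1+\hbar\bb X)\bigl(R(X)\shp_{\! q} w\bigr)=\bigl(\log(1+\hbar\bb X)R(X)\bigr)\shp_{\! q} w=\Bigl(\hbar\,\bb X\tfrac{1}{1-e_{\overline{1}}X}\Bigr)\shp_{\! q} w=\hbar X\,\bb\Bigl(\tfrac{1}{1-e_{\overline{1}}X}\shp_{\! q} w\Bigr),
\]
where the last step once more pulls $\bb$ out of the shuffle. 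Multiplying on the left by $\frac{1}{\hbar}\ba$ and recalling $\ba\bb=e_{\overline{1}}$ produces exactly the reduced identity, which finishes the proof.

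The only genuinely delicate point is the bookkeeping that makes the correction term $\psi(X)(R(X)\shp_{\! q} w)$ collapse; the algebraic inputs are all in hand, so no new computation beyond Proposition~\ref{prop:log-shp-*} and Lemma~\ref{lem:DX-shp} is needed once the ``$\bb$ pulls out of the shuffle'' principle is isolated. I would flag one subtlety to check carefully, namely that each manipulation respects the $X$-adic filtration so that reindexing the infinite sums and cancelling $\ba$ term by term are legitimate; this is guaranteed by $\rho_{s}(X)\in X^{s-1}\widehat{\mathfrak{H}^{1}}[[X]]$, which forces $(D_X^{\shp})^{s}(w)\in X^{s}\mathfrak{H}[[X]]$.
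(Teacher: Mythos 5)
Your proof is correct, and its architecture genuinely differs from the paper's. The paper exploits multiplicativity: both $\Psi_{X}$ and the map $w\mapsto(1-e_{\overline{1}}X)\bigl(\frac{1}{1-e_{\overline{1}}X}\shp_{\! q} w\bigr)$ are $\mathcal{C}[[X]]$-algebra homomorphisms (the latter requiring a separate Proposition~\ref{prop:log-shp-*}-style calculation yielding \eqref{eq:proof-Phi-shp-rewrite}), so it suffices to match the two maps on the generators $\ba$ and $\bb$, which the paper does via Lemma~\ref{lem:DX-shp} and \eqref{eq:log-shp}. You instead evaluate $\Psi_{X}(w)$ for arbitrary $w$ in one pass, reducing the proposition to the single identity $\psi(X)\bigl(R(X)\shp_{\! q} w\bigr)=e_{\overline{1}}X\bigl(\frac{1}{1-e_{\overline{1}}X}\shp_{\! q} w\bigr)$, and you prove that by summing Lemma~\ref{lem:DX-shp}~(ii), cancelling the leading $\ba$ (the same injectivity the paper invokes), and isolating the principle that left concatenation by $\bb$ --- hence by any power series in $\bb$ such as $\log(1+\hbar\bb X)$ --- passes through $\shp_{\! q}$ against a fixed $w$, which is immediate from the defining rule $(\bb v)\shp_{\! q} w'=\bb(v\shp_{\! q} w')$ together with $X$-adic continuity. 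In substance your treatment of the correction term is the computation the paper performs for $\eta(X)$ inside its $\Psi_{X}(\bb)$ case, but your pull-out principle lets you run it uniformly in $w$, so you never need the homomorphism property of the right-hand side nor the explicit generator formulas \eqref{eq:proof-Phi-shp-rewrite}; the price is the $X$-adic bookkeeping, which you correctly anchor in $\rho_{s}(X)\in X^{s-1}\mathfrak{H}[[X]]$. One pedantic remark: $\rho_{s}(X)$ lies in $\mathfrak{H}[[X]]$ rather than in $\widehat{\mathfrak{H}^{1}}[[X]]$ as you assert (following the paper's own statement of the lemma), since the recursion term $\log(1+\hbar\bb X)\rho_{s}(X)$ contains bare powers of $\bb$; this is harmless, as only the $X$-adic lower bound is ever used.
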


\begin{proof}
Let $\varphi_{X}$ be the $\mathcal{C}[[X]]$-linear map on $\mathfrak{H}[[X]]$ 
defined by 
\begin{align*}
\varphi_{X}(w)=(1-e_{\overline{1}}X)\left(\frac{1}{1-e_{\overline{1}}X} \shp_{\! q} w \right).   
\end{align*}
A similar calculation to that in the proof of Proposition \ref{prop:log-shp-*} shows that 
\begin{align}
\varphi_{X}(\ba)=\ba (1+\hbar \bb X)\frac{1}{1-\ba\bb X}, \quad 
\varphi_{X}(\bb)=(1-\ba \bb X)\bb \frac{1}{1-\ba\bb X},   
\label{eq:proof-Phi-shp-rewrite}
\end{align}
and that $\varphi_{X}$ is a $\mathcal{C}[[X]]$-algebra homomorphism. 
Therefore it suffices to show that 
$\varphi_{X}(u)=\Psi_{X}(u)$ for $u \in \{\ba, \bb\}$. 

First we calculate $\Psi_{X}(\ba)$. 
{}From Lemma \ref{lem:DX-shp} and the definition of $\Psi_{X}$  we see that 
\begin{align*}
\Psi_{X}(\ba)=\ba+\sum_{s=1}^{\infty}\frac{1}{s!}(\ba+\hbar)\psi(X)\rho_{s}(X)=
\ba+(\ba+\hbar)\left(\exp_{\shp_{\! q}}{(\psi(X))}-1\right). 
\end{align*}
Because of \eqref{eq:log-shp} it is equal to 
\begin{align*}
\ba+(\ba+\hbar)\left(\frac{1}{1-\ba\bb X}-1\right)=\ba(1+\hbar \bb X)\frac{1}{1-\ba\bb X}=
\varphi_{X}(\ba).   
\end{align*}

Next we calculate $\Psi_{X}(\bb)$. 
Using Proposition \ref{prop:log-shp-*} and \eqref{eq:log-shp} we see that 
\begin{align*}
\Psi_{X}(\bb)=\bb\exp_{\shp_{\! q}}{\left(\psi(X)\right)}-\ba \, \eta(X)=\bb\frac{1}{1-\ba\bb X}-\ba \bb \, \eta(X),   
\end{align*}
where $\eta(X)$ is given by 
\begin{align*}
\eta(X)=\frac{1}{\hbar}\log{(1+\hbar \bb X)}\sum_{s=1}^{\infty}\frac{1}{s!}\rho_{s}(X).  
\end{align*}
Since 
\begin{align*}
\ba \, \eta(X)=\psi(X)\sum_{s=1}^{\infty}\frac{1}{s!}\rho_{s}(X)=
\exp_{\shp_{\! q}}{(\psi(X))}-1=\ba \bb X\frac{1}{1-\ba\bb X}  
\end{align*}
and the map $w \mapsto \ba w$ of left multiplication by $\ba$ is injective on $\mathfrak{H}[[X]]$, 
we find that $\eta(X)=\bb X (1-\ba \bb X)^{-1}$. 
Thus we see that 
\begin{align*}
\Psi_{X}(\bb)=\bb\frac{1}{1-\ba\bb X}-\ba \bb^{2} X \frac{1}{1-\ba\bb X}=
(1-\ba \bb X)\bb \frac{1}{1-\ba\bb X}=\varphi_{X}(\bb). 
\end{align*}
This completes the proof. 
\end{proof}

Lastly we define the derivations $\partial_{n}$ on $\mathfrak{H}$ which 
relate $\delta_{n}$ and $d_{n}$. 

\begin{defn}
For $n \ge 1$ we define the $\mathcal{C}$-linear derivation 
$\partial_{n}: \mathfrak{H} \to \mathfrak{H}$ by 
\begin{align}
& 
\partial_{n}(\ba)=\frac{(-1)^{n}}{n}\ba \left\{ \ba(\bb+1)+\hbar\bb \right\}^{n-1}(\ba+\hbar)\bb, 
\label{eq:def-partial1} \\ 
& 
\partial_{n}(\bb)=\frac{(-1)^{n-1}}{n}\ba \left\{ (\bb+1)\ba+\hbar \bb\right\}^{n-1}(\bb+1)\bb. 
\label{eq:def-partial2} 
\end{align} 
We also define the $\mathcal{C}$-algebra homomorphism 
$\Delta_{X}: \mathfrak{H} \to \mathfrak{H}[[X]]$ by 
\begin{align*}
\Delta_{X}=\exp{\left(\sum_{n=1}^{\infty}X^{n}\partial_{n}\right)}.  
\end{align*}
\end{defn}

\begin{rem}
Using 
\begin{align*}
& 
\{\ba(\bb+1)+\hbar\bb\}(\ba+\hbar)=(\ba+\hbar)\{(\bb+1)\ba+\hbar\bb\}, \\ 
& 
\{(\bb+1)\ba+\hbar \bb\}(\bb+1)=(\bb+1)\{\ba(\bb+1)+\hbar \bb\},  
\end{align*} 
we see that 
\begin{align}
& 
\partial_{n}(\ba)=\frac{(-1)^{n}}{n}\ba (\ba+\hbar)
\left\{ (\bb+1)\ba+\hbar\bb \right\}^{n-1}\bb, 
\label{eq:def-partial3} \\ 
& 
\partial_{n}(\bb)=\frac{(-1)^{n-1}}{n}\ba(\bb+1) \left\{ \ba(\bb+1)+\hbar \bb\right\}^{n-1}\bb. 
\label{eq:def-partial4} 
\end{align}
\end{rem}

\begin{thm}\label{thm:Delta}
It holds that $\Phi_{X}=\Psi_{X}\Delta_{X}$ on $\mathfrak{H}$. 
\end{thm}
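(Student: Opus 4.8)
The plan is to exploit that all three maps are $\mathcal{C}$-algebra homomorphisms into $\mathfrak{H}[[X]]$: $\Phi_{X}$ and $\Delta_{X}$ by definition, and $\Psi_{X}$ as a $\mathcal{C}[[X]]$-algebra homomorphism. Hence both $\Psi_{X}\Delta_{X}$ and $\Phi_{X}$ are $\mathcal{C}$-algebra homomorphisms $\mathfrak{H}\to\mathfrak{H}[[X]]$. Since $\mathfrak{H}=\mathcal{C}\langle \ba,\bb\rangle$ is freely generated by $\ba$ and $\bb$, it suffices to verify $\Phi_{X}(u)=\Psi_{X}(\Delta_{X}(u))$ for $u\in\{\ba,\bb\}$. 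This reduces the theorem to two identities among explicit elements of $\mathfrak{H}[[X]]$.

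First I would record the data already available on generators. From \eqref{eq:proof-Phi-*-rewrite2} we have $\Phi_{X}(\ba)=\ba$ and $\Phi_{X}(\bb)=(1+\ba X)\bb\,(1-\ba\bb X)^{-1}$, and from \eqref{eq:proof-Phi-shp-rewrite} we have the closed forms of $\Psi_{X}(\ba)$ and $\Psi_{X}(\bb)$. Thus the target identities become $\Psi_{X}(\Delta_{X}(\ba))=\ba$ and $\Psi_{X}(\Delta_{X}(\bb))=(1+\ba X)\bb\,(1-\ba\bb X)^{-1}$; equivalently $\Delta_{X}(\ba)=\Psi_{X}^{-1}(\ba)$ and $\Delta_{X}(\bb)=\Psi_{X}^{-1}\bigl((1+\ba X)\bb\,(1-\ba\bb X)^{-1}\bigr)$, where $\Psi_{X}^{-1}$ is available because $\Psi_{X}=(1-e_{\overline{1}}X)\bigl(\tfrac{1}{1-e_{\overline{1}}X}\shp_{\! q}(-)\bigr)$ by Proposition \ref{prop:Phi-shp-rewrite} and $\tfrac{1}{1-e_{\overline{1}}X}=\exp_{\shp_{\! q}}{(\psi(X))}$ is shuffle-invertible.

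The core of the argument is to compute $\Delta_{X}$ on the two generators directly from the definition of $\partial_{n}$. Writing $D_{X}^{\partial}=\sum_{n\ge1}X^{n}\partial_{n}$, I would first sum the generating series $D_{X}^{\partial}(\ba)$ and $D_{X}^{\partial}(\bb)$ into closed logarithmic form using \eqref{eq:def-partial1}, \eqref{eq:def-partial2} and their symmetric counterparts \eqref{eq:def-partial3}, \eqref{eq:def-partial4}; the two presentations of $\partial_{n}$ are precisely what make the $\ba$- and $\bb$-computations parallel. Then, mirroring the inductive device used for $\Psi_{X}$ (Lemma \ref{lem:DX-shp} and the auxiliary series $\rho_{s}(X)$), I would establish a closed expression for the iterates $(D_{X}^{\partial})^{s}(\ba)$ and $(D_{X}^{\partial})^{s}(\bb)$ and sum the exponential $\Delta_{X}=\exp(D_{X}^{\partial})$. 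The main obstacle is exactly here: because $D_{X}^{\partial}$ is a derivation, its higher iterates on a single generator spread over products, so evaluating $\exp(D_{X}^{\partial})$ in closed form requires isolating the right auxiliary series together with an ODE or recurrence in $X$ that they satisfy.

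Finally, with $\Delta_{X}(\ba)$ and $\Delta_{X}(\bb)$ in hand, I would apply $\Psi_{X}$ using its algebra-homomorphism property together with the shuffle rewriting of Proposition \ref{prop:Phi-shp-rewrite}, and check that the results collapse to $\ba$ and to $(1+\ba X)\bb\,(1-\ba\bb X)^{-1}$ respectively. As a consistency check at first order one verifies $\delta_{1}=d_{1}+\partial_{1}$ on $\ba$ and $\bb$, which is the linearization of the claimed factorization; the full statement is its non-commutative exponential refinement, and it is the closed-form evaluation of $\Delta_{X}$ on generators — rather than the concluding verification — that carries the weight of the proof.
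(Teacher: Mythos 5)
Your skeleton matches the paper's proof exactly: both $\Phi_{X}$ and $\Psi_{X}\Delta_{X}$ are $\mathcal{C}$-algebra homomorphisms, so equality need only be checked on $\ba$ and $\bb$, where $\Phi_{X}$ and $\Psi_{X}$ are already known in closed form by \eqref{eq:proof-Phi-*-rewrite2} and \eqref{eq:proof-Phi-shp-rewrite}. But the step you yourself flag as ``the main obstacle'' --- the closed-form evaluation of $\Delta_{X}=\exp\bigl(\sum_{n}X^{n}\partial_{n}\bigr)$ on the generators --- is precisely the content of the proof, and you do not supply the idea that makes it possible. Worse, the device you propose to imitate (the auxiliary series $\rho_{s}(X)$ of Lemma \ref{lem:DX-shp}) does not transfer: $\delta_{n}$ and $d_{n}$ are quasi-inner with respect to $*_{q}$ and $\shp_{\! q}$, i.e.\ of the form $w\mapsto u*_{q}w-uw$ and $w\mapsto u\shp_{\! q}w-uw$ for fixed $u$, and it is exactly this structure that makes the iterates $(D^{*}_{X})^{s}$ and $(D^{\shp}_{X})^{s}$ close up. The $\partial_{n}$ admit no such description relative to either product --- relating them to the other two families is the whole point of the theorem --- so iterating $D^{\partial}_{X}=\sum_{n}X^{n}\partial_{n}$ on a generator spreads over products with no evident recurrence, and your plan stalls there.

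The missing idea in the paper is a change of variables that linearizes $D^{\partial}_{X}$. Set $z=\ba(\bb+1)+\hbar\bb$. From \eqref{eq:def-partial2} and \eqref{eq:def-partial3} one checks $\partial_{n}(z)=\partial_{n}((\ba+\hbar)(\bb+1))=0$ for all $n$, hence $\Delta_{X}(z)=z$. Extending $\Delta_{X}$ to inverses (the paper works in the quotient field of $\mathfrak{H}$), one computes from \eqref{eq:def-partial1} that
\[
\partial_{n}(\ba^{-1}-z^{-1})=\partial_{n}(\ba^{-1})=\frac{(-1)^{n-1}}{n}\,z^{n}\,(\ba^{-1}-z^{-1}),
\]
so every $\partial_{n}$ acts on the single element $\ba^{-1}-z^{-1}$ by left multiplication by a power of the invariant $z$. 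Since $D^{\partial}_{X}$ kills $z$, its iterates on $\ba^{-1}-z^{-1}$ are just left multiplications by powers of $\log(1+zX)$, and the exponential sums exactly: $\Delta_{X}(\ba^{-1}-z^{-1})=(1+zX)(\ba^{-1}-z^{-1})$, which yields \eqref{eq:proof-Delta1}, i.e.\ $\Delta_{X}(\ba)=\ba(1+(\ba+\hbar)\bb X)^{-1}$, and then \eqref{eq:proof-Delta2} for $\Delta_{X}(\bb)$ via $\Delta_{X}(z)=z$. Only after these closed forms are available does your concluding step (apply $\Psi_{X}$ on generators and compare with \eqref{eq:proof-Phi-*-rewrite2}) go through. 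So the reduction and the final verification in your proposal are fine, but the load-bearing computation is a genuine gap, and the route you sketch for it would not succeed without the invariant-$z$ trick or an equivalent substitute.
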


\begin{proof}
It suffices to show that 
$\Phi_{X}(u)=\Psi_{X}(\Delta_{X}(u))$ for $u\in \{\ba, \bb\}$. 
For that purpose we extend the map $\Delta_{X}$ to the quotient field of $\mathfrak{H}$ by 
$\Delta_{X}(w^{-1})=-w^{-1}\Delta_{X}(w)w^{-1}$ for $w \in \mathfrak{H}\setminus\{0\}$, 
and calculate $\Delta_{X}(\ba)$ and $\Delta_{X}(\bb)$.  
Set $z=\ba(\bb+1)+\hbar\bb$. 
{}From \eqref{eq:def-partial2} and \eqref{eq:def-partial3}, we see that 
\begin{align*}
\partial_{n}(z)=\partial_{n}(z+\hbar)=
\partial_{n}((\ba+\hbar)(\bb+1))=0.    
\end{align*}
Hence $\Delta_{X}(z)=z$. 
Moreover, using \eqref{eq:def-partial1} we find that 
\begin{align*}
\partial_{n}(\ba^{-1}-z^{-1})=\partial_{n}(\ba^{-1})=
\frac{(-1)^{n-1}}{n}z^{n}(\ba^{-1}-z^{-1})  
\end{align*}
for $n \ge 1$. 
Therefore
\begin{align*}
\Delta_{X}(\ba^{-1}-z^{-1})=\left(\exp{(\log{(1+zX)})}\right)(\ba^{-1}-z^{-1})=
(1+zX)(\ba^{-1}-z^{-1}).  
\end{align*}
Since $\Delta_{X}(z^{-1})=z^{-1}$ we get 
\begin{align}
\Delta_{X}(\ba)=(\Delta_{X}(\ba^{-1}))^{-1}=\ba\frac{1}{1+(\ba+\hbar)\bb X}.  
\label{eq:proof-Delta1} 
\end{align}
Combining \eqref{eq:proof-Delta1} and $\Delta_{X}(z)=z$, we find that 
\begin{align}
\Delta_{X}(\bb)=\left\{1+(\ba+(\ba+\hbar)\bb)X\right\}\bb\frac{1}{1+\hbar \bb X}.  
\label{eq:proof-Delta2} 
\end{align} 
{}From \eqref{eq:proof-Delta1}, \eqref{eq:proof-Delta2} and 
\eqref{eq:proof-Phi-shp-rewrite} with $\varphi_{X}$ replaced by $\Psi_{X}$, 
we see that $\Psi_{X}(\Delta_{X}(u))$ is equal to 
$\Phi_{X}(u)$ given by 
\eqref{eq:proof-Phi-*-rewrite2} for $u \in \{\ba, \bb\}$. 
\end{proof}

As a corollary of Proposition \ref{prop:Phi-*-rewrite}, Proposition \ref{prop:Phi-shp-rewrite} and 
Theorem \ref{thm:Delta}, we obtain the following relation, 
which plays a crucial role in the rest of this paper.   

\begin{cor}\label{cor:Delta}
For $w \in \widehat{\mathfrak{H}^{1}}$, it holds that 
\begin{align}
\frac{1}{1-e_{\overline{1}}X}*_{q}w=\frac{1}{1-e_{\overline{1}}X}\shp_{\! q} \Delta_{X}(w).   
\label{eq:cor-Delta}
\end{align} 
\end{cor}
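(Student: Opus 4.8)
The plan is to read the identity off directly from the three results just proved, by cancelling the common factor $1-e_{\overline{1}}X$ appearing in the two rewriting formulas. First I would note that in the concatenation ring $\mathfrak{H}[[X]]$ the element $1-e_{\overline{1}}X$ is invertible, with inverse $\frac{1}{1-e_{\overline{1}}X}=\sum_{n\ge 0}(e_{\overline{1}}X)^{n}$, so left-multiplication by $\frac{1}{1-e_{\overline{1}}X}$ is a well-defined operation on $\mathfrak{H}[[X]]$ that strips off a left factor of $1-e_{\overline{1}}X$. Applying Proposition \ref{prop:Phi-*-rewrite} (valid since $w\in\widehat{\mathfrak{H}^{1}}$) and then left-multiplying by $\frac{1}{1-e_{\overline{1}}X}$ gives
\begin{align*}
\frac{1}{1-e_{\overline{1}}X}\,\Phi_{X}(w)=\frac{1}{1-e_{\overline{1}}X}*_{q}w.
\end{align*}

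Next I would do the same with the shuffle side. Proposition \ref{prop:Phi-shp-rewrite} holds for any element of $\mathfrak{H}[[X]]$, and since $\Delta_{X}\colon\mathfrak{H}\to\mathfrak{H}[[X]]$ we have $\Delta_{X}(w)\in\mathfrak{H}[[X]]$, so the proposition applies with $\Delta_{X}(w)$ in place of its argument. After the same left-multiplication by $\frac{1}{1-e_{\overline{1}}X}$ this yields
\begin{align*}
\frac{1}{1-e_{\overline{1}}X}\,\Psi_{X}(\Delta_{X}(w))=\frac{1}{1-e_{\overline{1}}X}\shp_{\! q}\Delta_{X}(w).
\end{align*}

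Finally I would invoke Theorem \ref{thm:Delta}, which gives $\Phi_{X}=\Psi_{X}\Delta_{X}$ on $\mathfrak{H}$, so that $\Psi_{X}(\Delta_{X}(w))=\Phi_{X}(w)$. Comparing the two displays above then forces
\begin{align*}
\frac{1}{1-e_{\overline{1}}X}*_{q}w=\frac{1}{1-e_{\overline{1}}X}\,\Phi_{X}(w)=\frac{1}{1-e_{\overline{1}}X}\shp_{\! q}\Delta_{X}(w),
\end{align*}
which is exactly \eqref{eq:cor-Delta}. I should be candid that there is no genuine obstacle here: all the substance is carried by the preceding three results, and the proof is a purely formal manipulation. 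The only points demanding any care are bookkeeping ones, namely keeping the three products in play strictly apart (ordinary concatenation, the stuffle product $*_{q}$, and the shuffle product $\shp_{\! q}$), so that the outer factor $1-e_{\overline{1}}X$ in Propositions \ref{prop:Phi-*-rewrite} and \ref{prop:Phi-shp-rewrite} is recognized as a \emph{concatenation} factor that the concatenation inverse cancels, together with the trivial verification that $\Delta_{X}(w)$ lands in $\mathfrak{H}[[X]]$ so that Proposition \ref{prop:Phi-shp-rewrite} is applicable.
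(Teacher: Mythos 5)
Your proof is correct and is exactly the argument the paper intends: the corollary is presented as an immediate consequence of Proposition \ref{prop:Phi-*-rewrite}, Proposition \ref{prop:Phi-shp-rewrite} and Theorem \ref{thm:Delta}, obtained by equating the two rewritings of $\Phi_{X}(w)=\Psi_{X}(\Delta_{X}(w))$ and cancelling the concatenation factor $1-e_{\overline{1}}X$, which is invertible in $\mathfrak{H}[[X]]$. Your bookkeeping remarks (that the outer factor is a concatenation factor, and that $\Delta_{X}(w)\in\mathfrak{H}[[X]]$ so Proposition \ref{prop:Phi-shp-rewrite} applies) are precisely the points that make this cancellation legitimate.
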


%%%%%%%%%%%%%%%%%%%%%%%%%%%%%%%%%%%
\subsection{Derivation relations for multiple harmonic $q$-series}

Making use of \eqref{eq:cor-Delta} we prove the derivation relations 
for multiple harmonic $q$-series. 

\begin{lem}
The $\mathcal{C}$-subalgebra $\widehat{\mathfrak{H}^{0}}$ is invariant under 
the derivation $\partial_{n} \, (n \ge 1)$. 
\end{lem}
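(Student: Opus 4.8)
The plan is to show that $\partial_n(e_{\mathbf{k}}) \in \widehat{\mathfrak{H}^{0}}$ for every index $\mathbf{k} \in \widehat{I}_0$, i.e.\ whenever $\mathbf{k}$ starts with a letter other than $1$. Since $\partial_n$ is a derivation and $\widehat{\mathfrak{H}^{0}}$ is a subalgebra, it suffices to understand how $\partial_n$ acts on a single generator $e_k$ and then control the \emph{leading behavior} of $\partial_n(e_{k_1}) e_{k_2} \cdots e_{k_r}$, because the remaining terms $e_{k_1}\cdots \partial_n(e_{k_j})\cdots e_{k_r}$ with $j \ge 2$ automatically begin with $e_{k_1}$ (a letter $\ne 1$) and so already lie in $\widehat{\mathfrak{H}^{0}}$.

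First I would compute $\partial_n(e_k)$ explicitly. Recall $e_{\overline 1} = \ba\bb$ and $e_k = \ba^{k-1}(\ba+\hbar)\bb$ for $k \ge 1$. Using the Leibniz rule together with the formulas \eqref{eq:def-partial1}--\eqref{eq:def-partial4} for $\partial_n(\ba)$ and $\partial_n(\bb)$ (and the convenient alternative forms given in the Remark), I would expand $\partial_n(e_k)$ and collect it into monomials in the $e_j$'s via \eqref{eq:def-enbar}. The key point to extract is whether the resulting expression, when written in the basis $\{e_{\mathbf l}\}$, can contain a monomial beginning with $e_1$. Since $\partial_n(\ba)$ begins with $\ba(\ba+\hbar)$ or $\ba(\ba+\hbar)\{\cdots\}$ and $\partial_n(\bb)$ begins with $\ba$, and each $e_k$ itself begins with $\ba$, every term of $\partial_n(e_k)$ begins with a factor $\ba\,\ba = \ba^2$ at the left; hence no term can equal $e_1 = (\ba+\hbar)\bb = \ba\bb + \hbar\bb$ or begin with such a block. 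The clean way to phrase this is: $\partial_n(e_k)$ lies in the left ideal $\ba^2 \mathfrak H$, equivalently in the span of $e_{\mathbf l}$ with $l_1 \ne 1$ (i.e.\ in $\widehat{\mathfrak H^0}$ restricted to nonempty indices), since $e_1$ is the only generator of the form $\ba^{0}(\ba+\hbar)\bb$ whose expansion contributes an $\ba\bb$ (weight-one, degree-$\ba$-equal-one) piece.

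The cleanest structural argument I would actually use avoids messy expansion: introduce the degree in $\ba$ (or simply observe which left-multiple of $\ba$ each generator is). Concretely, note $e_{\overline 1} = \ba\bb$ is the unique generator lying in $\ba\mathfrak H \setminus \ba^2\mathfrak H$ modulo the $\bb$-tail, while every $e_k$ with $k \ge 1$ lies in $\ba\mathfrak H$ and, crucially, $\partial_n(\ba), \partial_n(\bb) \in \ba\mathfrak H$ with their leading $\ba$ immediately followed by another $\ba$. I would therefore prove the auxiliary claim that $\partial_n(w) \in \ba^{2}\mathfrak H + (\text{terms beginning with }e_k,\,k\ge 2)$ for any $w \in \widehat{\mathfrak H^1}$, and then read off that applying $\partial_n$ to a word $e_{\mathbf k}$ with $k_1 \ne 1$ produces only words whose first generator is again some $e_j$ with $j \ne 1$. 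For the $j \ge 2$ summands this is immediate; for the $j=1$ summand $\partial_n(e_{k_1})e_{k_2}\cdots e_{k_r}$ it follows from the computation of $\partial_n(e_{k_1})$ above.

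The main obstacle will be the first step: carrying out the expansion of $\partial_n(e_k)$ carefully enough to see that \emph{no} $e_1$-leading term survives, since the derivations $\partial_n$ are defined by rather intricate polynomial expressions in $\ba, \bb, \hbar$ and the Leibniz rule generates many terms. The risk is a spurious low-$\ba$-degree term appearing from the $\hbar\bb$ pieces inside $\{\ba(\bb+1)+\hbar\bb\}^{n-1}$; I would track the leftmost letter of each term and confirm it is always $\ba$ followed by $\ba$, using that both $\partial_n(\ba)$ and $\partial_n(\bb)$ carry the explicit left factor $\ba$ (visible in \eqref{eq:def-partial1}, \eqref{eq:def-partial2}) and that $e_k$ carries a left factor $\ba$ as well, so every Leibniz term starts with $\ba\cdot\ba$. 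Once the leading-letter bookkeeping is pinned down, closing the argument is routine.
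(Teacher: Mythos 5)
Your overall frame---reduce to the generators by the Leibniz rule, then control the leading structure of words in $\ba,\bb$---is workable, but the concrete invariants you propose are false, and they fail exactly at the point you yourself flagged as the main risk. First, the assertion that ``every $e_k$ with $k\ge 1$ lies in $\ba\mathfrak{H}$'' is wrong: $e_1=(\ba+\hbar)\bb=\ba\bb+\hbar\bb$ contains the word $\hbar\bb$, so $e_1\notin\ba\mathfrak{H}$; indeed $e_1$ is the \emph{unique} generator not lying in $\ba\mathfrak{H}$, which is the only reason the lemma has content. Second, the claim that every term of $\partial_n(e_k)$ begins with $\ba^2$ is wrong: already $\partial_1(\ba)=-\ba(\ba+\hbar)\bb=-\ba^2\bb-\hbar\,\ba\bb$ contains the term $-\hbar\,\ba\bb\in\ba\mathfrak{H}\setminus\ba^2\mathfrak{H}$, i.e.\ the ``spurious low-$\ba$-degree terms from the $\hbar\bb$ pieces'' really do occur, so membership in $\ba^2\mathfrak{H}$ is not a preserved property (it is also stronger than needed, since words beginning with $\ba\bb$, i.e.\ with $e_{\overline{1}}$, are perfectly admissible). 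Your auxiliary claim likewise cannot hold for all $w\in\widehat{\mathfrak{H}^{1}}$: for $w=e_1w'$ the Leibniz term $e_1\partial_n(w')$ produces words beginning with $\bb$. Finally, the step you call ``automatic'' for the $j\ge 2$ Leibniz terms silently uses that $\partial_n(e_{k_j})\in\widehat{\mathfrak{H}^{1}}$---membership of $e_{k_1}u$ in $\widehat{\mathfrak{H}^{0}}$ requires $u\in\widehat{\mathfrak{H}^{1}}$, not merely $u\in\mathfrak{H}$---and this includes $k_j=1$, so the generator $e_1$ cannot be sidestepped.

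The invariant that does work is weaker and two-sided: the defining formulas \eqref{eq:def-partial1} and \eqref{eq:def-partial2} show $\partial_n(\ba),\,\partial_n(\bb)\in\ba\mathfrak{H}\bb$, and $\ba\mathfrak{H}\bb$ is stable under the Leibniz expansion, so $\partial_n(\mathcal{C}+\ba\mathfrak{H}\bb)\subseteq\ba\mathfrak{H}\bb$. What your sketch is missing is the bridge $\widehat{\mathfrak{H}^{0}}=\mathcal{C}+\ba\mathfrak{H}\bb$: the inclusion $\subseteq$ holds because every $e_{\bk}$ with $k_1\ne 1$ expands into words beginning with $\ba$ and ending with $\bb$; the inclusion $\supseteq$ follows by cutting a word of $\ba\mathfrak{H}\bb$ into blocks $\ba^m\bb$ and using $\ba^m\bb=e_{\overline{m}}$, $\bb=\hbar^{-1}(e_1-e_{\overline{1}})$ and \eqref{eq:def-enbar}, the leftmost block having $m\ge 1$ so that only $e_{\overline{1}}$ and $e_j$ with $j\ge 2$ occur as first letters. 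With this bridge your argument closes, and it is then genuinely different from the paper's: the paper never leaves $\widehat{\mathfrak{H}^{1}}$, instead computing the derivation on generators explicitly, namely $\partial_n(\ba)=\frac{(-1)^n}{n}\ba(\ba+e_1)^{n-1}e_1$ as in \eqref{eq:partial-a}, then $\partial_n(e_1)=-\partial_n(\ba)$ from $\partial_n(\ba+e_1)=0$ as in \eqref{eq:partial-ab}, then induction on $k$ via \eqref{eq:partial-rec}, and finally $\hbar\partial_n(\bb)=\frac{(-1)^{n-1}}{n}(\ba+e_{\overline{1}})(\ba+e_1)^{n-1}(e_1-e_{\overline{1}})$ to handle $e_{\overline{1}}$. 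The corrected word-filtration route is shorter but purely qualitative; the paper's route costs more computation but yields explicit images of the generators, which it reuses later (e.g.\ in identifying $\partial_n\iota$ with $\frac{(-1)^n}{n}\iota\tilde{\partial}_n$ and in the proof of Theorem \ref{thm:Ohno-rel}).
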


\begin{proof}
It suffices to prove that $\partial_{n}(e_{k}) \in \widehat{\mathfrak{H}^{0}}$ 
for $n \ge 1$ and $k \in \widehat{\mathbb{N}}$. 
Note that the operator of left multiplication by $\ba$ leaves $\widehat{\mathfrak{H}^{0}}$ invariant  
and the formula \eqref{eq:def-partial1} implies that the element  
\begin{align}
\partial_{n}(\ba)=\frac{(-1)^{n}}{n}\ba(\ba+e_{1})^{n-1}e_{1} 
\label{eq:partial-a}
\end{align}
belongs to $\widehat{\mathfrak{H}^{0}}$. 
Hence 
\begin{align}
\partial_{n}(e_{1})=\partial_{n}(z-\ba)=-\partial_{n}(\ba) \in \widehat{\mathfrak{H}^{0}},  
\label{eq:partial-ab}
\end{align}
where $z=\ba(\bb+1)+\hbar\bb$. 
{}From it we see that 
$\partial_{n}(e_{k})$ belongs to $\widehat{\mathfrak{H}^{0}}$ for $k \ge 2$ by induction on $k$ 
using 
\begin{align}
\partial_{n}(e_{k})=\partial_{n}(\ba \, e_{k-1})=\partial_{n}(\ba)e_{k-1}+\ba \,\partial_{n}(e_{k-1}).  
\label{eq:partial-rec}
\end{align}
Moreover, from \eqref{eq:def-partial4}, we have 
\begin{align*}
\partial_{n}(e_{1}-e_{\overline{1}})=\hbar \partial_{n}(\bb)=
\frac{(-1)^{n-1}}{n}(\ba+e_{\overline{1}})(\ba+e_{1})^{n-1}(e_{1}-e_{\overline{1}}),  
\end{align*}
which belongs to $\widehat{\mathfrak{H}^{0}}$, 
and hence also does $\partial_{n}(e_{\overline{1}})$.  
\end{proof}

Now we are in a position to prove the derivation relations. 

\begin{thm}\label{thm:derivation}
For any $n \ge 1$ and $w \in \widehat{\mathfrak{H}^{0}}$, 
it holds that $Z_{q}(\partial_{n}(w))=0$.  
\end{thm}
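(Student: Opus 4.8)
The plan is to feed the identity of Corollary \ref{cor:Delta} through the map $Z_q$ and use the double shuffle relations to collapse both sides into products, after which a single invertible power series can be cancelled. Throughout I extend $Z_q$, $*_q$ and $\shp_{\! q}$ to $\widehat{\mathfrak{H}^0}[[X]]$ coefficient-wise (equivalently, by $\mathcal{C}[[X]]$-linearity). First I would record that every object occurring in \eqref{eq:cor-Delta} actually lives in $\widehat{\mathfrak{H}^0}[[X]]$: since $\overline 1\neq 1$, each power $e_{\overline 1}^{\,m}=e_{(\overline 1,\dots,\overline 1)}$ lies in $\widehat{\mathfrak{H}^0}$, so $\frac{1}{1-e_{\overline 1}X}=\sum_{m\ge 0}e_{\overline 1}^{\,m}X^m\in\widehat{\mathfrak{H}^0}[[X]]$ with constant term $1$; and for $w\in\widehat{\mathfrak{H}^0}\subseteq\widehat{\mathfrak{H}^1}$ the preceding lemma (invariance of $\widehat{\mathfrak{H}^0}$ under each $\partial_n$) gives $\Delta_X(w)=\exp\!\big(\sum_n X^n\partial_n\big)(w)\in\widehat{\mathfrak{H}^0}[[X]]$. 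In particular Corollary \ref{cor:Delta} applies to $w$.

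Next I would apply $Z_q$ to both sides of \eqref{eq:cor-Delta}. On the left, Proposition \ref{prop:stuffle-rel} applied coefficient-wise gives $Z_q\!\big(\tfrac{1}{1-e_{\overline 1}X}*_q w\big)=Z_q\!\big(\tfrac{1}{1-e_{\overline 1}X}\big)\,Z_q(w)$; on the right, Proposition \ref{prop:shuffle-rel}(iii) likewise gives $Z_q\!\big(\tfrac{1}{1-e_{\overline 1}X}\shp_{\! q}\Delta_X(w)\big)=Z_q\!\big(\tfrac{1}{1-e_{\overline 1}X}\big)\,Z_q(\Delta_X(w))$. The power series $Z_q\!\big(\tfrac{1}{1-e_{\overline 1}X}\big)\in\mathbb{C}[[X]]$ has constant term $Z_q(1)=1$, hence is invertible, and cancelling it yields the single generating-function identity $Z_q(\Delta_X(w))=Z_q(w)$ for every $w\in\widehat{\mathfrak{H}^0}$.

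Finally I would extract the individual relations $Z_q(\partial_n(w))=0$ from $Z_q(\Delta_X(w)-w)=0$. Expanding $\Delta_X(w)=\sum_{k\ge 0}\frac{1}{k!}\big(\sum_m X^m\partial_m\big)^k(w)$, the $X^n$-coefficient of $\Delta_X(w)-w$ contains $\partial_n(w)$ together with compositions $\partial_{i_1}\cdots\partial_{i_k}(w)$ with $k\ge 2$ and $i_1+\cdots+i_k=n$. This mixing is the only real obstacle: the identity is packaged as a composition-exponential, so no single coefficient equals $Z_q(\partial_n(w))$ outright. I would resolve it by induction on $n$. For $k\ge 2$ one has $i_1\le n-1$, and since each $\partial_m$ preserves $\widehat{\mathfrak{H}^0}$ the inner element $\partial_{i_2}\cdots\partial_{i_k}(w)$ again lies in $\widehat{\mathfrak{H}^0}$; the inductive hypothesis $Z_q\circ\partial_{i_1}=0$ (valid as $i_1<n$) then kills every such term. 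Hence vanishing of the $X^n$-coefficient of $Z_q(\Delta_X(w)-w)$ reduces to $Z_q(\partial_n(w))=0$, the base case $n=1$ being the $X^1$-coefficient $Z_q(\partial_1(w))=0$. Equivalently, since $Z_q\circ(\Delta_X-\mathrm{id})=0$ on all of $\widehat{\mathfrak{H}^0}[[X]]$ and $\Delta_X-\mathrm{id}$ raises the $X$-valuation, one may take the composition-logarithm to get $Z_q\circ\log\Delta_X=Z_q\circ\sum_n X^n\partial_n=0$ directly; reading off the $X^n$-coefficient completes the proof.
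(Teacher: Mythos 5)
Your proof is correct and follows essentially the same route as the paper: apply $Z_{q}$ to Corollary \ref{cor:Delta}, use the stuffle and shuffle homomorphism properties (i.e.\ the double shuffle relations) to turn both sides into products, and then disentangle the compositions $\partial_{j_{1}}\cdots\partial_{j_{r}}(w)$ by induction on $n$, using the invariance of $\widehat{\mathfrak{H}^{0}}$ under each $\partial_{m}$. The only differences are cosmetic: you cancel the invertible series $Z_{q}\bigl(\tfrac{1}{1-e_{\overline{1}}X}\bigr)$ at the outset to get the clean intermediate identity $Z_{q}(\Delta_{X}(w))=Z_{q}(w)$ (with the formal-logarithm shortcut as an equivalent finish), whereas the paper carries the factors $Z_{q}(e_{\overline{1}}^{m})$ through the same induction.
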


\begin{proof}
Corollary \ref{cor:Delta} implies that 
\begin{align*}
e_{\overline{1}}^{n}*w-e_{\overline{1}}^{n}\shp_{\! q} w=
\sum_{r=1}^{n} \frac{1}{r!} 
\sum_{\substack{m\ge 0, j_{1}, \ldots , j_{r}\ge 1 \\ m+j_{1}+\cdots +j_{r}=n}} 
e_{\overline{1}}^{m}\shp_{\! q} \left(\partial_{j_{1}}\cdots \partial_{j_{r}}(w)\right)
\end{align*} 
for $n \ge 1$ and $w \in \widehat{\mathfrak{H}^{0}}$. 
Using the double shuffle relations (Theorem \ref{thm:double-shuffle}) we see that 
\begin{align*}
\sum_{r=1}^{n} \frac{1}{r!} 
\sum_{\substack{m\ge 0, j_{1}, \ldots , j_{r}\ge 1 \\ m+j_{1}+\cdots +j_{r}=n}} 
Z_{q}(e_{\overline{1}}^{m})Z_{q}(\partial_{j_{1}}\cdots \partial_{j_{r}}(w))=0. 
\end{align*}
Now the induction on $n$ implies that $Z_{q}(\partial_{n}(w))=0$ for any $w \in \widehat{\mathfrak{H}^{0}}$. 
\end{proof}

Let us compare our derivation relations (Theorem \ref{thm:derivation}) with those 
for multiple zeta values. 
Let $\mathfrak{h}=\mathbb{Q}\langle \bx, \by \rangle$ be the non-commutative polynomial ring 
with indeterminates $\bx$ and $\by$.  
Set $\bz_{k}=\bx^{k-1}\by \, (k \ge 1)$ and $\mathfrak{h}^{0}=\mathbb{Q}+\bx \mathfrak{h}\by$. 
An index which belongs to $I_{0}$ is said to be \textit{admissible}. 
For an admissible index $\bk=(k_{1}, \ldots , k_{r})$, 
the \textit{multiple zeta value} (MZV) $\zeta(\bk)$ is defined by 
\begin{align*}
\zeta(\bk)=\sum_{m_{1}>\cdots >m_{r}\ge 1}\frac{1}{m_{1}^{k_{1}} \cdots m_{r}^{k_{r}}}.  
\end{align*}
Then the $\mathbb{Q}$-linear map $Z:\mathfrak{h}^{0} \to \mathbb{R}$ is uniquely determined by 
$Z(1)=1$ and $Z(\bz_{k_{1}} \cdots \bz_{k_{r}})=\zeta(k_{1}, \ldots , k_{r})$ for 
any admissible index $(k_{1}, \ldots , k_{r})$.  
Now define the $\mathbb{Q}$-linear derivation $\tilde{\partial}_{n} \,(n \ge 1)$ on $\mathfrak{h}$ by  
\begin{align*}
\tilde{\partial}_{n}(\bx)=\bx(\bx+\by)^{n-1}\by, \quad 
\tilde{\partial}_{n}(\by)=-\bx(\bx+\by)^{n-1}\by. 
\end{align*}
Then it holds that 
\begin{align*}
Z(\tilde{\partial}_{n}(w))=0 \qquad (n \ge 1, \, w \in \mathfrak{h}^{0}),  
\end{align*}
which is called the \textit{derivation relation} for MZVs \cite{HO, IKZ}. 

Set $\mathfrak{h}^{1}=\mathbb{Q}+\mathfrak{h}\by$. 
It is a $\mathbb{Q}$-algebra freely generated by the set 
$\{\bz_{k}\}_{k \ge 1}$. 
Hence it is embedded into $\widehat{\mathfrak{H}^{0}}$ through the $\mathbb{Q}$-algebra homomorphism 
$\iota: \mathfrak{h}^{1} \to \widehat{\mathfrak{H}^{0}}$ defined 
by $\iota(\bz_{k})=e_{k} \, (k \ge 1)$. 
{}From \eqref{eq:partial-a}, \eqref{eq:partial-ab} and \eqref{eq:partial-rec} we see that 
\begin{align*}
\frac{(-1)^{n}}{n}\iota\tilde{\partial}_{n}=\partial_{n}\iota 
\end{align*}
on $\mathfrak{h}^{1}$.  
Therefore Theorem \ref{thm:derivation} implies that 
\begin{align*}
Z_{q}(\iota(\tilde{\partial}_{n}(w)))=0 \qquad (n \ge 1, w \in \mathfrak{h}^{0}).  
\end{align*}

For an admissible index $(k_{1}, \ldots , k_{r})$, the $q$-series 
\begin{align*}
Z_{q}(\iota(\bz_{k_{1}} \cdots \bz_{k_{r}}))=\sum_{m_{1}>\cdots >m_{r}\ge 1}
\frac{q^{(k_{1}-1)m_{1}+\cdots +(k_{r}-1)m_{r}}}{[m_{1}]^{k_{1}} \cdots [m_{r}]^{k_{r}}} 
\end{align*}
is nothing but the Bradley-Zhao model of a $q$-analogue of MZVs. 
Thus we obtain another proof for the following theorem due to Bradley \cite{Bradley}. 

\begin{cor}
The Bradley-Zhao model of a $q$-analogue of multiple zeta values satisfies  
the derivation relations for multiple zeta values in the same form. 
\end{cor}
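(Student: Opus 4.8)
The plan is to assemble three ingredients already in place: the algebra embedding $\iota\colon\mathfrak{h}^{1}\to\widehat{\mathfrak{H}^{0}}$ sending $\bz_{k}\mapsto e_{k}$, the intertwining identity $\frac{(-1)^{n}}{n}\iota\tilde{\partial}_{n}=\partial_{n}\iota$ on $\mathfrak{h}^{1}$, and the vanishing Theorem~\ref{thm:derivation}. For an admissible $w\in\mathfrak{h}^{0}$ I would first observe that $\iota(w)\in\widehat{\mathfrak{H}^{0}}$, since a monomial $\bz_{k_{1}}\cdots\bz_{k_{r}}$ with $k_{1}\ge 2$ is sent to $e_{k_{1}}\cdots e_{k_{r}}=e_{\bk}$ with $\bk\in I_{0}\subseteq\widehat{I}_{0}$. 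By the invariance lemma preceding Theorem~\ref{thm:derivation}, $\partial_{n}(\iota(w))$ again lies in $\widehat{\mathfrak{H}^{0}}$, so $Z_{q}$ may be applied and Theorem~\ref{thm:derivation} yields $Z_{q}(\partial_{n}(\iota(w)))=0$.

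The central step is the intertwining identity, which I would establish on the free generators $\bz_{k}$ and then propagate by the Leibniz rule. Under the correspondence $\bx\leftrightarrow\ba$ (as the left multiplication used to build $\bz_{k}=\bx\,\bz_{k-1}$ from $e_{k}=\ba\,e_{k-1}$) and $\by\leftrightarrow e_{1}$, formulas~\eqref{eq:partial-a} and \eqref{eq:partial-ab} match $\partial_{n}(\ba)$ and $\partial_{n}(e_{1})$ with $\frac{(-1)^{n}}{n}$ times the images of $\tilde{\partial}_{n}(\bx)=\bx(\bx+\by)^{n-1}\by$ and $\tilde{\partial}_{n}(\by)=-\bx(\bx+\by)^{n-1}\by$; the recursion~\eqref{eq:partial-rec} then mirrors the Leibniz expansion $\tilde{\partial}_{n}(\bz_{k})=\tilde{\partial}_{n}(\bx)\bz_{k-1}+\bx\,\tilde{\partial}_{n}(\bz_{k-1})$, so an induction on $k$ gives $\partial_{n}(\iota(\bz_{k}))=\frac{(-1)^{n}}{n}\iota(\tilde{\partial}_{n}(\bz_{k}))$ for every $k\ge 1$. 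Since $\iota$ is an algebra homomorphism and both $\partial_{n}\iota$ and $\iota\tilde{\partial}_{n}$ obey the same Leibniz rule relative to the embedded product, agreement on generators forces $\frac{(-1)^{n}}{n}\iota\tilde{\partial}_{n}=\partial_{n}\iota$ on all of $\mathfrak{h}^{1}$.

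Combining these, $0=Z_{q}(\partial_{n}(\iota(w)))=\frac{(-1)^{n}}{n}Z_{q}(\iota(\tilde{\partial}_{n}(w)))$, whence $Z_{q}(\iota(\tilde{\partial}_{n}(w)))=0$ for all $n\ge 1$ and $w\in\mathfrak{h}^{0}$. Finally I would reinterpret this as a statement about the Bradley-Zhao model: since $Z_{q}(\iota(\bz_{k_{1}}\cdots\bz_{k_{r}}))$ equals $\zeta_{q}(k_{1},\ldots,k_{r})$ for every admissible index, expanding $\tilde{\partial}_{n}(w)=\sum_{j}c_{j}\,\bz_{\bk^{(j)}}$ into admissible words shows that the linear relation $\sum_{j}c_{j}\zeta_{q}(\bk^{(j)})=0$ is literally the MZV derivation relation with each $\zeta$ replaced by its Bradley-Zhao $q$-analogue, i.e.\ the relation holds \emph{in the same form}.

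The only genuine obstacle is the intertwining identity; everything else is bookkeeping. I expect the subtlety there to be purely notational: $\iota$ is defined only on $\mathfrak{h}^{1}$ through the generators $\bz_{k}$, so one must phrase the $\bx\leftrightarrow\ba$, $\by\leftrightarrow e_{1}$ dictionary carefully, as a comparison of the two recursions rather than as a map on single letters, and check that $\tilde{\partial}_{n}$ preserves $\mathfrak{h}^{1}$ so that $\iota\tilde{\partial}_{n}$ is even defined. Once \eqref{eq:partial-a}--\eqref{eq:partial-rec} are granted, this verification is a short induction.
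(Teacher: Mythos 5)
Your proposal is correct and follows essentially the same route as the paper: the paper also deduces the corollary by combining the embedding $\iota$, the intertwining identity $\frac{(-1)^{n}}{n}\iota\tilde{\partial}_{n}=\partial_{n}\iota$ (read off from \eqref{eq:partial-a}, \eqref{eq:partial-ab} and \eqref{eq:partial-rec} exactly as you do), and Theorem \ref{thm:derivation}, then identifies $Z_{q}\circ\iota$ on admissible words with the Bradley--Zhao values. Your added care about $\tilde{\partial}_{n}$ preserving $\mathfrak{h}^{1}$ and about phrasing the $\bx\leftrightarrow\ba$, $\by\leftrightarrow e_{1}$ dictionary is sound bookkeeping that the paper leaves implicit.
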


%%%%%%%%%%%%%%%%%%%%%%%%%%%%%%%%%%%%%%%%%%%%%%%%%%%%%%%%%%%%%%%%%%%%%%%%%%%%%%%
\section{Ohno-type relations}\label{sec:Ohno} 

\subsection{Finite multiple harmonic $q$-series at a root of unity}\label{subsec:FMHqS}

In \cite{BTT} Bachmann, Tasaka and the author introduce 
finite multiple harmonic $q$-series at a root of unity 
and find a connection to finite multiple zeta values (FMZVs) 
and symmetric multiple zeta values (SMZVs). 
Here we briefly recall the results in \cite{BTT}. 

Suppose that $n \ge 2$ and $\zeta_{n}$ is a primitive $n$-th root of unity. 
For an index $\bk=(k_{1}, \ldots , k_{r})$ which belongs to $I$, we set 
\begin{align*}
z_{n}(\bk; \zeta_{n})=\sum_{n>m_{1}>\cdots >m_{r}\ge 1}\prod_{j=1}^{r}F_{k_{j}}(m_{j})\big|_{q=\zeta_{n}}  
\end{align*}
and call it the \textit{finite multiple harmonic $q$-series at a root of unity}. 
By definition we set $z_{n}(\bk)=0$ if $r\ge n$. 
Note that, if $n$ is a prime $p$, 
$z_{p}(\bk; \zeta_{p})$ belongs to the integer ring $\mathbb{Z}[\zeta_{p}]$ 
because $[m]|_{q=\zeta_{p}}$ is a cyclotomic unit for $0<m<p$. 

Now we recall the definition of FMZVs. 
Set 
\begin{align*}
\mathcal{A}=\prod_{\hbox{\scriptsize $p$:prime}}\mathbb{F}_{p}/
\bigoplus_{\hbox{\scriptsize $p$:prime}}\mathbb{F}_{p}. 
\end{align*}
It is endowed with a $\mathbb{Q}$-algebra structure by diagonal multiplication.  
An element of $\mathcal{A}$ is represented by a sequence $(a_{p})_{p}$ of elements of $\mathbb{F}_{p}$, 
and two elements $(a_{p})_{p}$ and $(b_{p})_{p}$ of $\mathcal{A}$ are equal if $a_{p}=b_{p}$ for 
all but a finite number of primes $p$. 

Let $\bk=(k_{1}, \ldots , k_{r})$ be an index which belongs to $I$. 
The FMZV $\zeta_{\mathcal{A}}(\bk)$ is the element of $\mathcal{A}$ defined by 
\begin{align*}
\zeta_{\mathcal{A}}(\bk)=\left( 
\sum_{p>m_{1}>\cdots >m_{r}\ge 1}\frac{1}{m_{1}^{k_{1}} \cdots m_{r}^{k_{r}}}
\quad \mathrm{mod} \,\, p
\right)_{p}.  
\end{align*}

Next we recall the definition of SMZVs. 
We define the stuffle product $*$ on $\mathfrak{h}^{1}=\mathbb{Q}+\mathfrak{h} \by$ by 
\begin{align*}
& 
1*w=w*1=w, \\ 
& 
(\bz_{k}w)*(\bz_{l}w')=\bz_{k}(w*\bz_{l}w')+\bz_{l}(\bz_{k}w*w')+\bz_{k+l}(w*w') 
\end{align*}
for $w, w' \in \mathfrak{h}^{1}$ and $k, l\ge 1$. 
We denote by $\mathfrak{h}^{1}_{*}$ 
the commutative $\mathbb{Q}$-algebra $\mathfrak{h}^{1}$ equipped with the multiplication $*$.  
Let $\mathcal{Z}=\sum_{\bk \in I}\mathbb{Q}\zeta(\bk)$ the $\mathbb{Q}$-vector space 
spanned by all the MZVs. 
Then there exists a unique $\mathbb{Q}$-algebra homomorphism 
\begin{align*}
R: \mathfrak{h}^{1}_{*} \longrightarrow \mathcal{Z}[T] 
\end{align*}
such that $R(e_{1})=T$ and $R(e_{\bk})=\zeta(\bk)$ for any admissible index $\bk$. 

For an index $\bk$ which belongs to $I$ we set $R_{\bk}(T)=R(e_{\bk})$. 
Then the SMZV is defined by 
\begin{align*}
\zeta_{\mathcal{S}}(k_{1}, \ldots , k_{r})=\sum_{i=1}^{r}(-1)^{k_{1}+\cdots +k_{i}}
R_{k_{i}, k_{i-1}, \ldots , k_{1}}(T)R_{k_{i+1}, \ldots , k_{r-1}, k_{r}}(T). 
\end{align*}
The right hand side does not depend on $T$ and belongs to $\mathcal{Z}$. 

In \cite{KZ} Kaneko and Zagier conjecture that there exists the $\mathbb{Q}$-algebra homomorphism 
\begin{align*}
\varphi: \mathcal{A} \longrightarrow \mathcal{Z}/\zeta(2)\mathcal{Z} 
\end{align*}
such that $\varphi(\zeta_{\mathcal{A}}(\bk))\equiv \zeta_{\mathcal{S}}(\bk)$ mod $\zeta(2)\mathcal{Z}$ 
for any $\bk \in I$. 

\begin{thm}\cite{BTT}
\begin{enumerate}
 \item For a prime $p$, we denote by $\mathfrak{p}_{p}$ 
the ideal of $\mathbb{Z}[\zeta_{p}]$ generated by $1-\zeta_{p}$, 
and identify $\mathbb{Z}[\zeta_{p}]/\mathfrak{p}_{p}$ with the finite field $\mathbb{F}_{p}$. 
Then, for any $\bk \in I$, it holds that 
\begin{align*}
(z_{p}(\bk; \zeta_{p}) \, \mathrm{mod} \, \mathfrak{p}_{p})_{p}=\zeta_{\mathcal{A}}(\bk)  
\end{align*}
in $\mathcal{A}$. 
 \item For any $\bk \in I$, the limit 
\begin{align}
\xi(\bk)=\lim_{n \to \infty}z_{n}(\bk; e^{2\pi i/n}) 
\label{eq:def-xi}
\end{align}
converges and it holds that $\mathop{\mathrm{Im}}\xi(\bk) \in \pi \mathcal{Z}$ and 
\begin{align*}
\mathop{\mathrm{Re}}{\xi(\bk)} \equiv \zeta_{\mathcal{S}}(\bk) \qquad \mathrm{mod}\,\, \zeta(2)\mathcal{Z}.  
\end{align*}
\end{enumerate} 
\end{thm}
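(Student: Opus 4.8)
The plan is to reduce each summand modulo $\mathfrak{p}_{p}=(1-\zeta_{p})$ termwise. First I record that $\mathfrak{p}_{p}$ is the unique prime of $\mathbb{Z}[\zeta_{p}]$ above $p$, with residue field $\mathbb{F}_{p}$, and that $\zeta_{p}\equiv 1 \pmod{\mathfrak{p}_{p}}$. For $1\le m\le p-1$ this gives
\[
[m]\big|_{q=\zeta_{p}}=1+\zeta_{p}+\cdots+\zeta_{p}^{m-1}\equiv m \pmod{\mathfrak{p}_{p}},
\]
and since $m\not\equiv 0$ the $q$-integer $[m]$ is a unit (this is exactly the cyclotomic-unit remark made just before the theorem) with $[m]^{-1}\equiv m^{-1}$. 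Likewise $\zeta_{p}^{(k-1)m}\equiv 1$, so $F_{k}(m)\big|_{q=\zeta_{p}}\equiv m^{-k}\pmod{\mathfrak{p}_{p}}$ for every $k\ge 1$ and every $1\le m\le p-1$. Summing over the simplex $p>m_{1}>\cdots>m_{r}\ge 1$ and applying the ring identification $\mathbb{Z}[\zeta_{p}]/\mathfrak{p}_{p}\cong\mathbb{F}_{p}$ yields $z_{p}(\bk;\zeta_{p})\equiv\sum_{p>m_{1}>\cdots>m_{r}\ge 1}(m_{1}^{k_{1}}\cdots m_{r}^{k_{r}})^{-1}\pmod{\mathfrak{p}_{p}}$, which is precisely the $p$-th component of $\zeta_{\mathcal{A}}(\bk)$; assembling over all primes proves (i). This part is routine.

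\textbf{Setup for part (ii).} Now set $q=\zeta_{n}=e^{2\pi i/n}$ and let $w=k_{1}+\cdots+k_{r}$ be the weight. Writing $[m]^{-k}=(1-\zeta_{n})^{k}(1-\zeta_{n}^{m})^{-k}$, I factor out $(1-\zeta_{n})^{w}\sim(-2\pi i/n)^{w}$. Two asymptotics drive everything. For \emph{fixed} $m$ one has $[m]\big|_{q=\zeta_{n}}\to m$ and $\zeta_{n}^{(k-1)m}\to 1$, hence $F_{k}(m)\to m^{-k}$. For the reflected range I use the exact identity $[\,n-l\,]=-\zeta_{n}^{-l}[l]$, which gives
\[
F_{k}(n-l)\big|_{q=\zeta_{n}}=(-1)^{k}\frac{\zeta_{n}^{\,l}}{[l]^{k}}\longrightarrow\frac{(-1)^{k}}{l^{k}}\quad(n\to\infty)
\]
for fixed $l$. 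The plan is to split the summation simplex according to which $m_{j}$ stay bounded (``near $0$''), which satisfy $n-m_{j}$ bounded (``near $n$''), and which are genuinely interior. Establishing convergence of $\xi(\bk)$ together with an explicit limit amounts to making this decomposition quantitative, with uniform tail bounds on the multiple harmonic sums so that the interior variables are suppressed by the $(1-\zeta_{n})^{w}$ prefactor.

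\textbf{Identification with the SMZV.} Granting the decomposition, the configurations producing the real part are those in which $m_{1}>\cdots>m_{i}$ lie near $n$ and $m_{i+1}>\cdots>m_{r}$ lie near $0$, for a split point $i$. The near-$0$ block contributes the stuffle-regularized value $R_{k_{i+1},\ldots,k_{r}}(T)$, the parameter $T$ arising from the logarithmic growth when $k_{i+1}=1$. Substituting $l_{j}=n-m_{j}$ in the near-$n$ block reverses the order and, by the reflection identity above, produces the sign $(-1)^{k_{1}+\cdots+k_{i}}$ together with $R_{k_{i},k_{i-1},\ldots,k_{1}}(T)$. Summing over $i$ reproduces the defining sum
\[
\sum_{i}(-1)^{k_{1}+\cdots+k_{i}}\,R_{k_{i},\ldots,k_{1}}(T)\,R_{k_{i+1},\ldots,k_{r}}(T)
\]
of $\zeta_{\mathcal{S}}(\bk)$, the $T$-dependence cancelling between the two blocks consistently with its stated $T$-independence. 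This identifies $\mathrm{Re}\,\xi(\bk)$ with $\zeta_{\mathcal{S}}(\bk)$ up to the corrections below.

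\textbf{Imaginary part and the main obstacle.} The pure block contributions above are real, being $\mathbb{Q}$-combinations of MZVs. The imaginary part is instead an $O(1)$ contribution from the interior and crossover regions: there the suppression $1/n$ carried by each phase $\zeta_{n}^{m_{j}}=e^{2\pi i m_{j}/n}$ is balanced by summation over a range of length $\sim n$, and resumming these phases against the Fourier expansions of Bernoulli polynomials (equivalently, values of $\cot(\pi x)$ and its derivatives) yields contributions of shape $\pi$ times MZVs, giving $\mathrm{Im}\,\xi(\bk)\in\pi\mathcal{Z}$; the even powers of $\pi$, i.e.\ multiples of $\zeta(2)$, are exactly what is absorbed into the ambiguity $\bmod\ \zeta(2)\mathcal{Z}$ of the real part. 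I expect the hard part to be precisely this error analysis: proving that the interior regions are negligible for the real leading term \emph{uniformly} in $n$ while contributing $O(1)$ to the imaginary part, and showing that the divergent $T$-dependent pieces of the two boundary blocks cancel so that the limit exists with the asserted real and imaginary parts. Everything else is bookkeeping of signs, order reversals, and the stuffle regularization already encoded in $R$.
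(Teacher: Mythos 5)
This theorem is not proved in the paper at all: it is imported verbatim from \cite{BTT} as background for Section 4, so there is no internal proof to compare your attempt against; it has to be measured against what a complete proof (as in \cite{BTT}) must contain. Your part (i) meets that standard: the termwise reduction $[m]\equiv m$ and $\zeta_p^{(k_j-1)m}\equiv 1 \pmod{\mathfrak{p}_p}$, combined with the fact that $[m]\vert_{q=\zeta_p}$ is a unit of $\mathbb{Z}[\zeta_p]$ for $0<m<p$ (so that $z_p(\bk;\zeta_p)$ lies in $\mathbb{Z}[\zeta_p]$ and inverses reduce to inverses in $\mathbb{F}_p$), is exactly the standard argument and is complete.

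Part (ii), however, is a plan rather than a proof, and what you leave out is the theorem. Concretely: (a) you never establish that the limit defining $\xi(\bk)$ exists; the suppression of the interior of the simplex by the prefactor $(1-\zeta_n)^{\mathrm{wt}(\bk)}$ is precisely what requires estimates, since the number of interior lattice points grows like $n^r$ and the factors $(1-\zeta_n^{m_j})^{-k_j}$ blow up near both ends of the range, so ``interior is negligible'' is a claim, not an observation. (b) The boundary blocks do not literally contribute $R_{\bk}(T)$ for a formal variable $T$: for finite $n$ each block is a concrete complex number whose divergent part is roughly $\log n$ together with a bounded complex correction, and the assertion that these divergences cancel across the split point $i$ and leave exactly the $T$-independent combination defining $\zeta_{\mathcal{S}}(\bk)$ is the regularization statement that must be proved, not invoked; this is delicate precisely because $\bk\in I$ need not be admissible, so every block can diverge. (c) The claim $\mathop{\mathrm{Im}}\xi(\bk)\in\pi\mathcal{Z}$ rests on one sentence about resumming phases against Bernoulli/cotangent expansions; no such computation is performed, and nothing in your text rules out the crossover regions contributing arbitrary periods rather than $\pi$ times elements of $\mathcal{Z}$. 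You acknowledge this yourself (``I expect the hard part to be precisely this error analysis''), and since that analysis is absent, part (ii) is not established. To your credit, the skeleton is the right one and matches \cite{BTT}: the reflection identity $[n-l]=-\zeta_n^{-l}[l]$, the resulting sign $(-1)^{k_1+\cdots+k_i}$, and the two-boundary decomposition are exactly the correct ingredients; the gap is one of execution, not conception.
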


\subsection{Double shuffle relations}

Here we state the double shuffle relations for 
the finite multiple harmonic $q$-series at a root of unity. 
The proof is similar to that for FMZVs (see \cite{Kaneko}). 

In this subsection we fix a primitive $n$-th root of unity $\zeta_{n}$ for $n\ge 2$. 
We denote by the same letter $z_{n}$ the $\mathcal{C}$-linear map 
$z_{n}: \widehat{\mathfrak{H}^{1}} \to \mathbb{C}$ uniquely determined by 
$z_{n}(e_{\bk})=z_{n}(\bk; \zeta_{n})$ for any $\bk \in I$, 
where the $\mathcal{C}$-module structure of $\mathbb{C}$ is defined by 
$\hbar c=(1-\zeta_{n})c$ for $c \in \mathbb{C}$. 
 
\begin{thm}\label{thm:DS}
\begin{enumerate}
 \item For $n \ge 2$ and $w, w' \in \widehat{\mathfrak{H}^{1}}$, 
it holds that 
\begin{align*}
z_{n}(w*_{q} w')=z_{n}(w)z_{n}(w').  
\end{align*}
 \item Let $\psi: \widehat{\mathfrak{H}}^{1} \to \widehat{\mathfrak{H}}^{1}$ be 
the $\mathcal{C}$-algebra anti-involution defined by 
\begin{align*}
\psi(e_{\overline{1}})=-e_{1}, \quad 
\psi(e_{1})=-e_{\overline{1}}, \quad 
\psi(e_{k})=(-1)^{k}\sum_{j=2}^{k}\binom{k-2}{j-2}\hbar^{k-j}e_{j} \,\, (k \ge 2).  
\end{align*}
Then it holds that 
\begin{align}
z_{n}(w \shp_{\! q} w')=z_{n}(\psi(w)w')  
\label{eq:cyc-DS}
\end{align}
for $n \ge 2$ and $w, w' \in \widehat{\mathfrak{H}^{1}}$. 
\end{enumerate}
\end{thm}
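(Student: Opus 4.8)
The plan is to handle the two products by different routes. Identity (i) is the truncated analogue of Proposition~\ref{prop:stuffle-rel}, so I would simply rerun its proof with the upper bound $m_1<n$ inserted: the recursion defining $*_{q}$ mirrors the splitting of $\big(\sum_{n>m_1>\cdots}\big)\big(\sum_{n>m'_1>\cdots}\big)$ according to whether the two current top indices satisfy $m_1>m'_1$, $m_1<m'_1$ or $m_1=m'_1$, the diagonal case being recorded by $\circ_{q}$ through $F_k(m)F_l(m)=F_{k+l}(m)+(1-q)F_{k+l-1}(m)$ and its companions. Since on both sides every summation variable stays in $\{1,\dots,n-1\}$, the truncation is preserved and an induction on total depth gives $z_n(w*_{q}w')=z_n(w)z_n(w')$. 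Throughout I use the $\mathcal C$-module structure $\hbar c=(1-\zeta_n)c$ and extend $z_n$ to all of $\widehat{\mathfrak{H}^{1}}$ by the same sum $\sum_{n>m_1>\cdots>m_r\ge1}\prod_j F_{k_j}(m_j)\big|_{q=\zeta_n}$, now allowing the letter $\overline 1$ via $F_{\overline 1}(m)=q^m/[m]$.

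For (ii) I first pass to Taylor coefficients. Writing $L_w(t)=\sum_{M\ge1}c_w(M)t^M$, the coefficient $c_w(M)$ is the inner sum $\sum_{M=m_1>\cdots>m_r\ge1}\prod_j F_{k_j}(m_j)$, and for $M<n$ it is a rational function of $q$ regular at $q=\zeta_n$ (all denominators are powers of $[m]$ with $0<m<n$, nonzero there). Proposition~\ref{prop:shuffle-rel}(i) gives the Cauchy product $c_{w\shp_{\! q}w'}(M)=\sum_{M_1+M_2=M}c_w(M_1)c_{w'}(M_2)$ as an identity of rational functions valid for all $q$; specializing at $\zeta_n$ and using $z_n(v)=\sum_{M=1}^{n-1}c_v(M)$ yields
\[
z_n(w\shp_{\! q}w')=\sum_{\substack{M_1,M_2\ge1\\ M_1+M_2<n}}c_w(M_1)\,c_{w'}(M_2)\Big|_{q=\zeta_n}.
\]

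Now I reflect only the $w$-block. For its variables set $\tilde m_i=n-m_i$ and use $[n-m]=-q^{-m}[m]$, which gives $F_k(n-m)\big|_{q=\zeta_n}=(-1)^k\overline F_k(m)$ with $\overline F_k(m)=q^m/[m]^k$ (and $F_{\overline 1}(n-m)=-F_1(m)$). Under this substitution the constraint $M_1+M_2<n$ becomes $\tilde m_1>m'_1$ and the decreasing chain $m_1>\cdots>m_r$ becomes $\tilde m_1<\cdots<\tilde m_r<n$, so the two blocks glue into one admissible chain $n>\tilde m_r>\cdots>\tilde m_1>m'_1>\cdots>m'_s\ge1$. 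Reading the reflected factors in reversed order one obtains
\[
z_n(w\shp_{\! q}w')=z_n\big(\theta(w)\,w'\big),
\]
where $\theta$ is the $\mathcal C$-algebra anti-automorphism of $\widehat{\mathfrak{H}^{1}}$ induced by the reflection, i.e.\ $\theta(e_k)=(-1)^k\overline e_k$ with $\overline e_k$ the element representing $\overline F_k$ in the $\{F_j\}$-basis.

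It then remains to identify $\theta$ with $\psi$. From $F_1F_j=F_{j+1}+(1-q)F_j$ one gets the recursion $\overline F_k=F_1\overline F_{k-1}$ with $\overline F_2=F_2$, whose solution is $\overline e_k=\sum_{j=2}^k\binom{k-2}{j-2}\hbar^{k-j}e_j$; together with $\overline e_1=e_{\overline 1}$ and $F_{\overline 1}(n-m)=-F_1(m)$ this yields $\theta(e_k)=(-1)^k\sum_{j=2}^k\binom{k-2}{j-2}\hbar^{k-j}e_j$, $\theta(e_1)=-e_{\overline 1}$ and $\theta(e_{\overline 1})=-e_1$, which is exactly $\psi$. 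I expect the main obstacle to be the reflection step: one must verify that $m_i\mapsto n-m_i$ is an honest bijection between the region $\{M_1+M_2<n\}$ and the glued nested region, with every inequality (especially the strict junction $\tilde m_1>m'_1$) and every sign correctly tracked, and then confirm that the recursion collapses $\overline e_k$ into the closed form matching $\psi$. A smaller point to state carefully is the legitimacy of specializing the coefficient identity at $q=\zeta_n$, which is precisely why one restricts to coefficients $c_w(M)$ with $M<n$, where no $[m]$ with $m\ge n$ can vanish.
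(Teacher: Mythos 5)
Your proposal is correct and takes essentially the same route as the paper's proof: part (i) by re-running the stuffle argument with the truncation $m_{1}<n$, and part (ii) by extracting Taylor coefficients of $L_{w}(t)$ (the paper's maps $A_{m}$), specializing the coefficient-wise Cauchy product --- a rational-function identity with no poles at $q=\zeta_{n}$ when the coefficient index is $<n$ --- and then reflecting the $w$-block via $m\mapsto n-m$ to glue the two chains into one. The only difference is cosmetic: you derive the expansion of $q^{m}/[m]^{k}$ in the $F_{j}$-basis from the recursion $\overline{F}_{k}=F_{1}\overline{F}_{k-1}$, whereas the paper states the reflection formulas $F_{k}(n-m)=(-1)^{k}\sum_{j=2}^{k}\binom{k-2}{j-2}(1-\zeta_{n})^{k-j}F_{j}(m)$ directly.
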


\begin{proof} 
The proof for (i) is similar to that of Proposition \ref{prop:stuffle-rel}. 
Here we prove (ii). 

For the time being we assume that $|q|<1$. 
For $m \ge 1$ we define the $\mathcal{C}$-linear map 
$A_{m}: \widehat{\mathfrak{H}^{1}} \to \mathbb{C}$ by
\begin{align*}
A_{m}(1)=1, \qquad 
A_{m}(e_{\bk})=\sum_{m=m_{1}>m_{2}>\cdots >m_{r}>0}\prod_{j=1}^{r}F_{k_{j}}(m_{j}) 
\end{align*}
for $\bk=(k_{1}, \ldots , k_{r}) \in \widehat{I}$. 
Then we have 
\begin{align*}
L_{w}(t)=\sum_{m=1}^{\infty}t^{m}A_{m}(w) \qquad (w \in \widehat{\mathfrak{H}^{1}}).  
\end{align*}
Proposition \ref{prop:shuffle-rel} implies that 
\begin{align*}
A_{m}(w_{1}\shp_{\! q} w_{2})=\sum_{\substack{\alpha+\beta=m \\ \alpha, \beta \ge 0}}
A_{\alpha}(w_{1})A_{\beta}(w_{2})  
\end{align*}
for $m \ge 1$ and $w_{1}, w_{2}\in\widehat{\mathfrak{H}^{1}}$. 
Note that it is an equality of rational functions in $q$ 
without poles at $n$-th roots of unity for $n>m$. 
Hence we can set $q=\zeta_{n}$. 

Now let us prove \eqref{eq:cyc-DS}. 
We may assume that 
$w=e_{\bk}$ and $w'=e_{\bl}$ for some indices 
$\bk=(k_{1}, \ldots , k_{r})$ and $\bl=(l_{1}, \ldots , l_{s})$. 
{}From the previous observation we see that 
\begin{align*}
z_{n}(e_{\bk}\shp_{\! q}e_{\bl})
&=\sum_{n>m>0}A_{m}(e_{\bk}\shp_{\! q} e_{\bl})\big|_{q=\zeta_{n}}=
\sum_{n>m>0}\sum_{\substack{\alpha+\beta=m \\ \alpha, \beta \ge 0}}
A_{\alpha}(e_{\bk})A_{\beta}(e_{\bl})\big|_{q=\zeta_{n}} \\ 
&=\sum_{n>\alpha>\beta \ge 0}
A_{n-\alpha}(e_{\bk})A_{\beta}(e_{\bl})\big|_{q=\zeta_{n}}. 
\end{align*}
When $q=\zeta_{n}$ it holds that 
\begin{align*}
& 
F_{\overline{1}}(n-m)=-F_{1}(m), \qquad 
F_{1}(n-m)=-F_{\overline{1}}(m), \\ 
& 
F_{k}(n-m)=(-1)^{k}\sum_{j=2}^{k}(1-\zeta_{n})^{k-j}\binom{k-2}{j-2}F_{j}(m) 
\end{align*}
for $n>m>0$ and $k \ge 2$. 
By changing the summation variable 
$m_{j}$ to $n-m_{r+1-j}$ in $A_{n-\alpha}(e_{\bk})$ and   
using the above formulas, 
we see that 
\begin{align*}
\sum_{n>\alpha>\beta \ge 0}
A_{n-\alpha}(e_{\bk})A_{\beta}(e_{\bl})\big|_{q=\zeta_{n}}=z_{n}(\psi(e_{\bk})e_{\bl}).   
\end{align*}
This completes the proof. 
\end{proof}

%%%%%%%%%%%%%%%%%%%%%%%%%%%%%%%%%%%%%%%%%%%
\subsection{Ohno-type relations}

In \cite{Oyama} Oyama proves linear relations for FMZVs and SMZVs 
of a similar form to Ohno's relations for MZVs \cite{Ohno}. 
Here we prove their $q$-analogue for the finite multiple harmonic $q$-series 
at a root of unity. 

Denote by $\mathfrak{H}^{1}_{\mathbb{Q}}$ the subalgebra of $\widehat{\mathfrak{H}^{1}}$ generated by 
the set $\{e_{k}\}_{k \ge 1}$ over $\mathbb{Q}$. 
Set $e_{0}=\ba$. 
Then we can identify $\mathfrak{H}^{1}_{\mathbb{Q}}$ with 
the non-commutative polynomial ring $\mathbb{Q}\langle e_{0}, e_{1} \rangle$. 
Note that $e_{k}=e_{0}^{k-1}e_{1}$ for $k \ge 1$. 

Let $\tau$ be the $\mathbb{Q}$-linear involution on $\mathfrak{H}^{1}_{\mathbb{Q}}$ defined by 
$\tau(e_{0})=e_{1}$ and $\tau(e_{1})=e_{0}$. 
Suppose that $\bk \in I\setminus\{\emptyset\}$. 
The monomial $e_{\bk}$ is uniquely written in the form 
$e_{\bk}=w e_{1}$ for some word $w$ in $\{e_{k}\}_{k \ge 1}$. 
Then we define the \textit{Hoffman dual} $\bk^{\vee}$ by 
the relation $\tau(w)e_{1}=e_{\bk^{\vee}}$. 
For example, if $\bk=(2,3,1)$, we have 
$e_{\bk}=(e_{0}e_{1}e_{0}^{2}e_{1})e_{1}$ and hence 
$e_{\bk^{\vee}}=\tau(e_{0}e_{1}e_{0}^{2}e_{1})e_{1}=e_{1}e_{0}e_{1}^{2}e_{0}e_{1}=e_{1}e_{2}e_{1}e_{2}$. 
Therefore $(2, 3, 1)^{\vee}=(1,2,1,2)$. 

For a tuple of non-negative integers $\be=(e_{1}, \ldots , e_{r})$ 
we define the \textit{depth} $\mathrm{dep}(\be)$ and the \textit{weight} $\mathrm{wt}(\be)$ by 
$\mathrm{dep}(\be)=r$ and $\mathrm{wt}(\be)=\sum_{j=1}^{r}e_{j}$, respectively.  

The Ohno-type relations are given as follows. 

\begin{thm}\label{thm:Ohno-rel} 
Suppose that $\bk \in I\setminus\{\emptyset\}$ and $\mathrm{dep}(\bk)=r$. 
Set $s=\mathrm{dep}(\bk^{\vee})=\mathrm{wt}(\bk)-r+1$. 
For $m \ge 0$ and $n \ge r+m+1$, it holds that 
\begin{align}
\sum_{\substack{\be \in (\mathbb{Z}_{\ge 0})^{s} \\ \mathrm{wt}(\be)=m}}
z_{n}((\bk^{\vee}+\be)^{\vee}; \zeta_{n})=\sum_{l=0}^{m}\frac{1}{n}\binom{n}{m-l+1}(1-\zeta_{n})^{m-l}
\sum_{\substack{\be' \in (\mathbb{Z}_{\ge 0})^{r} \\ \mathrm{wt}(\be')=l}}
z_{n}(\bk+\be'; \zeta_{n}).   
\label{eq:Ohno-rel} 
\end{align}
\end{thm}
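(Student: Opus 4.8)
The plan is to derive the whole relation by evaluating Corollary~\ref{cor:Delta} through $z_{n}$ and comparing coefficients of $X^{m}$. Attach to $\bk$ the index-raising series
\[
w(X)=\sum_{\be'\in(\mathbb{Z}_{\ge 0})^{r}}e_{\bk+\be'}\,X^{\mathrm{wt}(\be')}\in\mathfrak{H}^{1}_{\mathbb{Q}}[[X]],
\]
so that $z_{n}(w(X))=\sum_{l\ge 0}\bigl(\sum_{\mathrm{wt}(\be')=l}z_{n}(\bk+\be';\zeta_{n})\bigr)X^{l}$. Replacing $X$ by $-X$ in \eqref{eq:cor-Delta} gives $\frac{1}{1+e_{\overline{1}}X}*_{q}w(X)=\frac{1}{1+e_{\overline{1}}X}\shp_{\! q}\Delta_{-X}(w(X))$ in $\widehat{\mathfrak{H}^{1}}[[X]]$. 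Applying $z_{n}$ and using Theorem~\ref{thm:DS}(i) on the left and Theorem~\ref{thm:DS}(ii) on the right, together with $\psi(e_{\overline{1}})=-e_{1}$ (whence $\psi(\frac{1}{1+e_{\overline{1}}X})=\frac{1}{1-e_{1}X}$), I obtain
\[
z_{n}\Bigl(\tfrac{1}{1+e_{\overline{1}}X}\Bigr)\,z_{n}(w(X))=z_{n}\Bigl(\tfrac{1}{1-e_{1}X}\,\Delta_{-X}(w(X))\Bigr).\qquad(\ast)
\]

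Next I evaluate the scalar series on the left of $(\ast)$. Here $z_{n}(e_{\overline{1}}^{\,j})=\sum_{n>m_{1}>\cdots>m_{j}\ge 1}\prod_{i=1}^{j}F_{\overline{1}}(m_{i})\big|_{q=\zeta_{n}}$ vanishes for $j\ge n$, and the generating series over strictly decreasing tuples factors, so
\[
z_{n}\Bigl(\tfrac{1}{1+e_{\overline{1}}X}\Bigr)=\prod_{m=1}^{n-1}\Bigl(1-X\,F_{\overline{1}}(m)\big|_{q=\zeta_{n}}\Bigr),\qquad F_{\overline{1}}(m)\big|_{q=\zeta_{n}}=\frac{\zeta_{n}^{m}(1-\zeta_{n})}{1-\zeta_{n}^{m}}.
\]
Setting $Y=1+X(1-\zeta_{n})$ and using $\prod_{m=0}^{n-1}(1-\zeta_{n}^{m}T)=1-T^{n}$ and $\prod_{m=1}^{n-1}(1-\zeta_{n}^{m})=n$, this product collapses to $\frac{1}{n}\cdot\frac{1-Y^{n}}{1-Y}$, whose coefficient of $X^{j}$ equals $\frac{1}{n}\binom{n}{j+1}(1-\zeta_{n})^{j}$. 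Hence the coefficient of $X^{m}$ on the left of $(\ast)$ is exactly the right-hand side of \eqref{eq:Ohno-rel}.

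The remaining ingredient --- and the genuine obstacle --- is the purely algebraic identity
\[
\frac{1}{1-e_{1}X}\,\Delta_{-X}\bigl(w(X)\bigr)=\sum_{\be\in(\mathbb{Z}_{\ge 0})^{s}}e_{(\bk^{\vee}+\be)^{\vee}}\,X^{\mathrm{wt}(\be)}\qquad\text{in }\mathfrak{H}^{1}_{\mathbb{Q}}[[X]].\qquad(\ast\ast)
\]
The key simplification is that on $\mathfrak{H}^{1}_{\mathbb{Q}}=\mathbb{Q}\langle e_{0},e_{1}\rangle$ the variable $\hbar$ drops out of the derivations: by \eqref{eq:partial-a} and \eqref{eq:partial-ab}, $\partial_{n}(e_{0})=\frac{(-1)^{n}}{n}e_{0}(e_{0}+e_{1})^{n-1}e_{1}=-\partial_{n}(e_{1})$, so $\Delta_{-X}$ restricts to the exponential of the Ihara--Kaneko--Zagier derivations, $\partial_{n}=\frac{(-1)^{n}}{n}\iota\tilde{\partial}_{n}\iota^{-1}$. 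Thus $(\ast\ast)$ is an $\hbar$-free statement relating the index-raising operator, the derivation exponential, and the Hoffman duality $\bk\mapsto\bk^{\vee}$ (i.e.\ the involution $\tau$ read off through $e_{\bk}=ue_{1}\mapsto\tau(u)e_{1}$), which I would establish exactly as Oyama does for multiple zeta values, transporting his argument through $\iota$. The hard part is the bookkeeping: matching the left factor $\frac{1}{1-e_{1}X}$, the derivation exponential and $\tau$, and controlling the signs produced by $\Delta_{-X}$.

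Finally I read off the coefficient of $X^{m}$ in $(\ast\ast)$ and apply $z_{n}$. A weight count gives $\mathrm{dep}\bigl((\bk^{\vee}+\be)^{\vee}\bigr)=\mathrm{wt}(\bk)+m-s+1=r+m$ for $\mathrm{wt}(\be)=m$; the hypothesis $n\ge r+m+1$ ensures $r+m<n$, so none of these values is forced to vanish by the depth convention and the right-hand side of $(\ast\ast)$ maps under $z_{n}$ to precisely the left-hand side of \eqref{eq:Ohno-rel}. Comparing the coefficients of $X^{m}$ in $(\ast)$ and using the evaluation of the scalar factor from the second paragraph then yields \eqref{eq:Ohno-rel}.
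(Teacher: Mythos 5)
Your outer layers are sound and in fact coincide with the paper's own proof: the paper likewise feeds Corollary~\ref{cor:Delta} into the two halves of Theorem~\ref{thm:DS}, and your evaluation of $z_{n}\bigl(\tfrac{1}{1+e_{\overline{1}}X}\bigr)$ is exactly the paper's closing computation of $\sum_{r}z_{n}(\{\overline{1}\}^{r};\zeta_{n})T^{r}$, so the coefficient of $X^{m}$ on the left of your $(\ast)$ is indeed the right-hand side of \eqref{eq:Ohno-rel}. The genuine gap is the one you flag yourself: the identity $(\ast\ast)$ is never proved, and it is not bookkeeping that can be outsourced --- it \emph{is} the theorem. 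What \cite{Oyama} supplies, and what the paper imports as \cite[Lemma 2.4]{Oyama}, is the identity
\begin{align*}
\sum_{\substack{l, s \ge 0 \\ l+s=m-p}} e_{1}^{l}A_{\bk, s, p}=
\sum_{\substack{\blam \in \{0, 1\}^{r} \\ \mathrm{wt}(\blam)=p}}
\sum_{\substack{\be \in (\mathbb{Z}_{\ge 0})^{d} \\ \mathrm{wt}(\be)=m-p}}
e_{((\bk+\blam)^{\vee}+\be)^{\vee}},
\end{align*}
in which the binary shifts $\blam$ are still present and, after applying $z_{n}$, carry alternating signs (the paper's \eqref{eq:proof-Ohno2}). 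Eliminating the $\blam$'s --- i.e.\ passing from this to your $(\ast\ast)$, equivalently to the paper's \eqref{eq:proof-Ohno25} --- is precisely what the paper spends the bulk of its proof on, via induction on $m$ and an inclusion--exclusion count of the fibres $\nu_{p}^{-1}(\be)$. So your proposal reproduces the routine steps and omits, rather than defers, the core one.

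The route is salvageable, and more directly than by transporting Oyama's combinatorics. Since $\Delta_{X}$ fixes $z=e_{0}+e_{1}$ (proof of Theorem~\ref{thm:Delta}) and $\Delta_{X}(e_{0})=e_{0}\tfrac{1}{1+e_{1}X}$ by \eqref{eq:proof-Delta1}, one gets $\Delta_{-X}(e_{0})=e_{0}\tfrac{1}{1-e_{1}X}=:A$ and
\begin{align*}
\Delta_{-X}(e_{1})=z-\Delta_{-X}(e_{0})=e_{1}-e_{0}X\frac{1}{1-e_{1}X}\,e_{1}=(1-XA)\,e_{1},
\end{align*}
where the new factor stands to the \emph{left} of $e_{1}$ (check against $\partial_{1}(e_{1})=e_{0}e_{1}$; the paper's \eqref{eq:proof-Ohno1} transposes this factor, a misprint, although its subsequent expansion into the $A_{\bk,s,p}$ agrees with the correct version). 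Now write $w(X)=\prod_{i=1}^{r}\bigl[e_{0}^{k_{i}-1}\tfrac{1}{1-e_{0}X}e_{1}\bigr]$; since $\Delta_{-X}$ is a ring homomorphism and $X$ is central, the inner factors telescope,
\begin{align*}
\Delta_{-X}(w(X))=\prod_{i=1}^{r}\Bigl[A^{k_{i}-1}\frac{1}{1-XA}(1-XA)\,e_{1}\Bigr]
=\prod_{i=1}^{r}\bigl[A^{k_{i}-1}e_{1}\bigr].
\end{align*}
Hence $\tfrac{1}{1-e_{1}X}\Delta_{-X}(w(X))$ is the word $e_{\bk}$ with one factor $P=\tfrac{1}{1-e_{1}X}$ inserted at the far left and one after every $e_{0}$; as $P$ commutes with $e_{1}$, each $P$ may be slid rightward through the $e_{1}$'s, producing $e_{\bk}$ with one $P$ before every $e_{0}$ and one before the final $e_{1}$, which is precisely $\sum_{\be}e_{(\bk^{\vee}+\be)^{\vee}}X^{\mathrm{wt}(\be)}$. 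That proves $(\ast\ast)$, and with it your coefficient comparison yields \eqref{eq:Ohno-rel} (and in fact bypasses the paper's induction). As submitted, however, the proposal stops short of proving the theorem.
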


\begin{proof}
The proof is similar to that given in \cite{Oyama}.  
{}From \eqref{eq:proof-Delta1} and \eqref{eq:proof-Delta2}, we see that 
\begin{align}
\Delta_{X}(e_{k})=\left(e_{0}\frac{1}{1+e_{1}X}\right)^{k-1}
e_{1}\left(1+e_{0}X\frac{1}{1+e_{1}X}\right) 
\qquad (k \ge 1). 
\label{eq:proof-Ohno1}
\end{align} 
For $\bk=(k_{1}, \ldots , k_{r}) \in I\setminus\{\emptyset\}$ and $s\ge 0$, 
we define $a_{s}(\bk)$ by 
\begin{align*}
a_{s}(\bk)=\sum \prod_{1\le i \le r}^{\curvearrowright}
\left[ \left( \prod_{1\le l \le k_{i}}^{\curvearrowright}(e_{0}e_{1}^{j_{l}^{(i)}}) \right)e_{1}\right],  
\end{align*}
where the sum in the right hand side is over the set 
\begin{align*}
\{ (j_{l}^{(i)})_{\substack{1\le i \le r \\ 1\le l \le k_{i}}} \in (\mathbb{Z}_{\ge 0})^{\mathrm{wt}(\bk)} \, |\, 
\sum_{i=1}^{r}\sum_{l=1}^{k_{i}}j_{l}^{(i)}=s \}
\end{align*}
and $\displaystyle \prod_{1\le i \le k}^{\curvearrowright}Y_{i}$ stands for 
the ordered product $Y_{1}\cdots Y_{k}$. 
We also set 
\begin{align*}
A_{\bk, s, p}=\sum_{\substack{\lambda_{1}, \ldots , \lambda_{r} \in \{0, 1\} \\ \lambda_{1}+\cdots +\lambda_{r}=p}}
a_{s}(k_{1}+\lambda_{1}-1, \ldots, k_{r}+\lambda_{r}-1) 
\end{align*}
for $\bk=(k_{1}, \ldots , k_{r}) \in I\setminus\{\emptyset\}$ and $s, p \ge 0$. 
Then the equality \eqref{eq:proof-Ohno1} implies that 
\begin{align*}
\Delta_{X}(e_{\bk})=\sum_{p, s\ge 0}(-1)^{s}X^{p+s}A_{\bk, s, p}.  
\end{align*}
{}From the above formula and Corollary \ref{cor:Delta} we find that 
\begin{align*}
e_{\overline{1}}^{m}*_{q}e_{\bk}=\sum_{\substack{l, s, p \ge 0 \\ l+s+p=m}}
(-1)^{s}e_{\overline{1}}^{l}\shp_{\! q}A_{\bk, s, p} 
\end{align*}
for $m \ge 0$. 
Hence Theorem \ref{thm:DS} implies that 
\begin{align*}
(-1)^{m}z_{n}(e_{\overline{1}}^{m})z_{n}(e_{\bk})=
\sum_{p=0}^{\mathrm{min}\{m, r\}}(-1)^{p}
\sum_{\substack{l, s \ge 0 \\ l+s=m-p}}z_{n}(e_{1}^{l}A_{\bk, s, p}) 
\end{align*}
for $m \ge 0$ and $n \ge 1$. 
Now use the following formula \cite[Lemma 2.4]{Oyama} 
\begin{align*}
\sum_{\substack{l, s \ge 0 \\ l+s=m-p}} e_{1}^{l}A_{\bk, s, p}=
\sum_{\substack{\blam \in \{0, 1\}^{r} \\ \mathrm{wt}(\blam)=p}}  
\sum_{\substack{\be \in (\mathbb{Z}_{\ge 0})^{d} \\ \mathrm{wt}(\be)=m-p}}
e_{((\bk+\blam)^{\vee}+\be)^{\vee}},   
\end{align*}
where $r=\mathrm{dep}(\bk)$ and $d=\mathrm{dep}((\bk+\blam)^{\vee})$. 
Then we obtain 
\begin{align}
(-1)^{m}z_{n}(\{\overline{1}\}^{m})z_{n}(\bk)=
\sum_{p=0}^{\mathrm{min}\{m, r\}}(-1)^{p}
\sum_{\substack{\blam \in \{0, 1\}^{r} \\ \mathrm{wt}(\blam)=p}}  
\sum_{\substack{\be \in (\mathbb{Z}_{\ge 0})^{d} \\ \mathrm{wt}(\be)=m-p}}
z_{n}(((\bk+\blam)^{\vee}+\be)^{\vee})   
\label{eq:proof-Ohno2}
\end{align}
for $m \ge 0$. 

Now we take $\bk \in I\setminus\{\emptyset\}$, and 
set $r=\mathrm{dep}(\bk)$ and $s=\mathrm{dep}(\bk^{\vee})$.  
We show that 
\begin{align}
\sum_{\substack{\be \in (\mathbb{Z}_{\ge 0})^{s} \\ \mathrm{wt}(\be)=m}}z_{n}((\bk^{\vee}+\be)^{\vee})=
\sum_{l=0}^{m}(-1)^{m-l}z_{n}(\{\overline{1}\}^{m-l})
\sum_{\substack{\be \in (\mathbb{Z}_{\ge 0})^{r} \\ \mathrm{wt}(\be)=l}}z_{n}(\bk+\be) 
\label{eq:proof-Ohno25}
\end{align}
for $0\le m \le n-r-1$ by induction on $m$.   
If $m=0$ it follows from $(\bk^{\vee})^{\vee}=\bk$. 
Suppose that $m \ge 1$. 
Separate the right hand side of \eqref{eq:proof-Ohno2} into two parts with $p=0$ and $p\ge 1$. 
Then, from the induction hypothesis, we see that 
\begin{align}
& 
\sum_{\substack{\be \in (\mathbb{Z}_{\ge 0})^{s} \\ \mathrm{wt}(\be)=m}}z_{n}((\bk^{\vee}+\be)^{\vee})=
(-1)^{m}z_{n}(\{\overline{1}\}^{m})z_{n}(\bk) 
\label{eq:proof-Ohno3} \\ 
&\quad {}+\sum_{l=1}^{m}(-1)^{m-l}z_{n}(\{\overline{1}\}^{m-l})
\sum_{p=1}^{l}(-1)^{p-1}
\sum_{\substack{\blam \in \{0, 1\}^{r} \\ \mathrm{wt}(\blam)=p}}
\sum_{\substack{\bmu \in (\mathbb{Z}_{\ge 0})^{r} \\ \mathrm{wt}(\bmu)=l-p}}
z_{n}(\bk+\blam+\bmu).  
\nonumber 
\end{align}
For $1 \le p \le l$ we define the map 
\begin{align*}
\nu_{p}: & \{\blam \in \{0, 1\}^{r}\,|\,\mathrm{wt}(\blam)=p\}\times 
\{\bmu \in (\mathbb{Z}_{\ge 0})^{r}\,|\,\mathrm{wt}(\bmu)=l-p\} \\ 
& \quad \to 
\{\be \in (\mathbb{Z}_{\ge 0})^{r}\,|\,\mathrm{wt}(\be)=l\} 
\end{align*}
by $\nu_{p}(\blam, \bmu)=\blam+\bmu$. 
Then it holds that 
\begin{align*}
\sum_{p=1}^{l}(-1)^{p-1}
\sum_{\substack{\blam \in \{0, 1\}^{r} \\ \mathrm{wt}(\blam)=p}}
\sum_{\substack{\bmu \in (\mathbb{Z}_{\ge 0})^{r} \\ \mathrm{wt}(\bmu)=l-p}}
z_{n}(\bk+\blam+\bmu)=
\sum_{\substack{\be \in (\mathbb{Z}_{\ge 0})^{r} \\ \mathrm{wt}(\be)=l}}z_{n}(\bk+\be)
\sum_{p=1}^{l}(-1)^{p-1}\left|\nu_{p}^{-1}(\be)\right|. 
\end{align*}
For a tuple $\be=(e_{1}, \ldots , e_{r})$ of non-negative integers, set 
\begin{align*}
\mathrm{supp}(\be)=\{j \in \{1, 2, \ldots , r\}\,|\, e_{j}\ge 1\}.  
\end{align*}
Then we see that 
\begin{align*}
\nu_{p}^{-1}(\be)=\{(\blam, \be-\blam) \, |\, \blam \in \{0, 1\}^{r}, \, \mathrm{wt(\blam)}=p, \,  
\mathrm{supp}(\blam) \subset \mathrm{supp}(\be)\}.   
\end{align*}
Hence 
\begin{align*}
\sum_{p=1}^{l}(-1)^{p-1}\left|\nu_{p}^{-1}(\be)\right|=
\sum_{p=1}^{l}(-1)^{p-1}\binom{|\mathrm{supp}(\be)|}{p}=1
\end{align*}
for $\be \in (\mathbb{Z}_{\ge 0})^{r}$ satisfying $\mathrm{wt}(\be)=l$. 
Therefore the right hand side of \eqref{eq:proof-Ohno3} is equal to that of \eqref{eq:proof-Ohno25}. 

To derive the desired equality from \eqref{eq:proof-Ohno25}, we should prove that 
\begin{align*}
z_{n}(\{\overline{1}\}^{r}; \zeta_{n})=\frac{(-1)^{r}}{n}\binom{n}{r+1}(1-\zeta_{n})^{r} 
\end{align*}
for $n>r \ge 0$. 
We obtain it by calculating the generating function 
\begin{align*}
& 
\sum_{r=0}^{n-1}z_{n}(\{\overline{1}\}^{r}; \zeta_{n})T^{r}=
\prod_{m=1}^{n-1}\left(1+\frac{q^{m}}{[m]}T\right)\bigg|_{q=\zeta_{n}}=
\prod_{m=1}^{n-1}\frac{1-\zeta_{n}^{m}(1-(1-\zeta_{n})T)}{1-\zeta_{n}^{m}} \\ 
&=\frac{1-(1-(1-\zeta_{n})T)^{n}}{n(1-\zeta_{n})T}=
\sum_{r=0}^{n-1}\frac{(-1)^{r}}{n}\binom{n}{r+1}(1-\zeta_{n})^{r}T^{r}. 
\end{align*}
This completes the proof. 
\end{proof}

As a corollary of Theorem \ref{thm:Ohno-rel} we reproduce the Ohno-type relations 
for FMZVs and SMZVs. 

\begin{cor}\cite{Oyama}
Suppose that $\bk \in I\setminus\{\emptyset\}$. 
Set $r=\mathrm{dep}(\bk)$ and $s=\mathrm{dep}(\bk^{\vee})$. 
Then it holds that 
\begin{align*}
\sum_{\substack{\be \in (\mathbb{Z}_{\ge 0})^{s} \\ \mathrm{wt}(\be)=m}}
\zeta_{\mathcal{F}}((\bk^{\vee}+\be)^{\vee})=
\sum_{\substack{\be' \in (\mathbb{Z}_{\ge 0})^{r} \\ \mathrm{wt}(\be)=m}}
\zeta_{\mathcal{F}}(\bk+\be')
\end{align*}
for $m \ge 0$ and $\mathcal{F}=\mathcal{A}$ or $\mathcal{S}$.  
\end{cor}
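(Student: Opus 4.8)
The plan is to derive the corollary from Theorem~\ref{thm:Ohno-rel} by specializing the root of unity and passing to the two limits that connect $z_n(\bk;\zeta_n)$ to FMZVs and SMZVs, as recorded in Theorem~\cite{BTT}. The key observation is that the left-hand side of \eqref{eq:Ohno-rel} already has exactly the shape we want, so the entire task reduces to controlling the right-hand side, whose $q$-dependent coefficient $\tfrac{1}{n}\binom{n}{m-l+1}(1-\zeta_n)^{m-l}$ must collapse so that only the $l=m$ term survives after the limiting operations.

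First I would treat the finite case $\mathcal{F}=\mathcal{A}$. Here I take $n=p$ a prime and reduce \eqref{eq:Ohno-rel} modulo the prime ideal $\mathfrak{p}_p=(1-\zeta_p)$ of $\mathbb{Z}[\zeta_p]$, using part~(i) of Theorem~\cite{BTT} to identify $(z_p(\bk;\zeta_p)\bmod\mathfrak{p}_p)_p$ with $\zeta_{\mathcal{A}}(\bk)$. Since $1-\zeta_p\equiv 0$ in $\mathbb{F}_p=\mathbb{Z}[\zeta_p]/\mathfrak{p}_p$, every term with $m-l\ge 1$ on the right-hand side carries a positive power of $1-\zeta_p$ and hence vanishes modulo $\mathfrak{p}_p$; only the $l=m$ term with coefficient $\tfrac1p\binom{p}{1}=1$ survives. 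I must check that $p\ge r+m+1$ holds for all but finitely many primes (it does, once $p>r+m$), which is exactly what is needed for an identity in $\mathcal{A}$. This yields the claimed relation for $\mathcal{F}=\mathcal{A}$.

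For the symmetric case $\mathcal{F}=\mathcal{S}$ I would set $\zeta_n=e^{2\pi i/n}$ and let $n\to\infty$, invoking part~(ii) of Theorem~\cite{BTT}, which gives $\lim_{n\to\infty}z_n(\bk;e^{2\pi i/n})=\xi(\bk)$ with $\operatorname{Re}\xi(\bk)\equiv\zeta_{\mathcal{S}}(\bk)\bmod\zeta(2)\mathcal{Z}$. As $n\to\infty$ one has $1-\zeta_n=1-e^{2\pi i/n}=O(1/n)$, while $\tfrac1n\binom{n}{m-l+1}=O(n^{m-l})$; multiplying these, each right-hand coefficient with $m-l\ge 1$ is $O(n^{m-l})\cdot O(n^{-(m-l)})$, so it stays bounded but its \emph{imaginary-dominated} part must be shown to contribute only to $\operatorname{Im}\xi$ or to the $\zeta(2)\mathcal{Z}$ ideal, whereas the $l=m$ coefficient tends to $1$. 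Taking real parts and reducing modulo $\zeta(2)\mathcal{Z}$, the surviving contribution is $\sum_{\mathrm{wt}(\be')=m}\zeta_{\mathcal{S}}(\bk+\be')$, matching the left-hand limit $\sum_{\mathrm{wt}(\be)=m}\zeta_{\mathcal{S}}((\bk^\vee+\be)^\vee)$.

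The main obstacle will be the symmetric case: one must verify that the bounded-but-nonzero coefficients for $l<m$ do not pollute the answer after taking $\operatorname{Re}$ and reducing modulo $\zeta(2)\mathcal{Z}$. Concretely, the factor $(1-e^{2\pi i/n})^{m-l}\approx(-2\pi i/n)^{m-l}$ is purely imaginary-phased to leading order, and $\tfrac1n\binom{n}{m-l+1}\to\tfrac{1}{(m-l+1)!}$, so each stray term converges to a fixed complex multiple of a limit of $z_n$-values; the argument is to show these limits are of the form (real number)$\cdot\pi^{m-l}$ with $\operatorname{Re}$ landing in $\pi^{2}\mathbb{Q}$-combinations, hence in $\zeta(2)\mathcal{Z}=\pi^2\mathcal{Z}/6$, for $m-l$ even, while for $m-l$ odd they are purely imaginary and killed by $\operatorname{Re}$. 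Making this parity/weight bookkeeping precise, using $\operatorname{Im}\xi(\bk)\in\pi\mathcal{Z}$ and the homogeneity of the limit values, is the delicate point; everything else is a direct specialization of the already-proven identity \eqref{eq:Ohno-rel}.
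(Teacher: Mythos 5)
Your plan is correct and is essentially the paper's own proof: for $\mathcal{F}=\mathcal{A}$ it reduces \eqref{eq:Ohno-rel} at $n=p$ modulo $\mathfrak{p}_p=(1-\zeta_p)$, the only point you leave tacit being the one the paper states explicitly, namely that $\tfrac{1}{p}\binom{p}{m-l+1}$ is an integer (true since $1\le m-l+1\le p-1$ when $p\ge r+m+1$), which is what guarantees that each $l<m$ term genuinely lies in $\mathfrak{p}_p$ rather than acquiring a $p$ in the denominator; for $\mathcal{F}=\mathcal{S}$ it sets $\zeta_n=e^{2\pi i/n}$, lets $n\to\infty$, and takes real parts modulo $\zeta(2)\mathcal{Z}$, with the parity bookkeeping via $\operatorname{Im}\xi(\bk)\in\pi\mathcal{Z}$ that you call the delicate point being exactly what the paper (implicitly) uses. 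One slip to fix in your write-up: $\tfrac{1}{n}\binom{n}{m-l+1}$ does not tend to $\tfrac{1}{(m-l+1)!}$ (it grows like $n^{m-l}/(m-l+1)!$); rather it is the full coefficient $\tfrac{1}{n}\binom{n}{m-l+1}(1-e^{2\pi i/n})^{m-l}$ that converges, to $(-2\pi i)^{m-l}/(m-l+1)!$, a constant whose exact value is immaterial modulo $\zeta(2)\mathcal{Z}$, so the argument itself is unaffected.
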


\begin{proof}
First we consider the case of FMZVs. 
If $n$ is a prime $p$, then 
$\frac{1}{p}\binom{p}{m-l+1}$ is an integer for $m\ge l \ge 0$. 
Therefore, taking modulo $(1-\zeta_{p})$ of both sides of \eqref{eq:Ohno-rel}, 
we obtain the desired relation for FMZVs. 

To consider the case of SMZVs, 
we set $\zeta_{n}=e^{2\pi i/n}$ in \eqref{eq:Ohno-rel} and calculate the limit as $n \to \infty$. 
Using Stirling's formula, we see that 
\begin{align*}
\frac{1}{n}\binom{n}{m-l+1}(1-e^{2\pi i/n})^{m-l} \to (-2\pi i)^{m-l} \qquad (n \to \infty)  
\end{align*}
for $m \ge l \ge 0$. 
Hence it holds that 
\begin{align*}
\sum_{\substack{\be \in (\mathbb{Z}_{\ge 0})^{s} \\ \mathrm{wt}(\be)=m}}
\xi((\bk^{\vee}+\be)^{\vee})=
\sum_{l=0}^{m}(-2\pi i)^{m-l}
\sum_{\substack{\be' \in (\mathbb{Z}_{\ge 0})^{r} \\ \mathrm{wt}(\be)=m}}
\xi(\bk+\be'),  
\end{align*}
where $\xi(\bk)$ is defined by \eqref{eq:def-xi}. 
Taking the real parts modulo $\zeta(2)\mathcal{Z}=\pi^{2}\mathcal{Z}$, 
we obtain the desired equality for SMZVs. 
\end{proof}

Finally we prove Ohno-type relations for 
the cyclotomic analogue of FMZVs introduced in \cite{BTT}. 
As an analogue of $\mathcal{A}$ we define 
\begin{align*}
\mathcal{A}^{\mathrm{cyc}}=\left(\prod_{\hbox{\scriptsize $p$:prime}}\mathbb{Z}[\zeta_{p}]/(p)\right)/
\left(\bigoplus_{\hbox{\scriptsize $p$:prime}}\mathbb{Z}[\zeta_{p}]/(p)\right).  
\end{align*}
It also carries the $\mathbb{Q}$-algebra structure. 
The \textit{cyclotomic analogue of FMZV} is defined by 
\begin{align*}
Z^{\mathrm{cyc}}(\bk)=\left( z_{p}(\bk; \zeta_{p}) \quad \mathrm{mod}\,\, (p) \right)_{p} 
\in \mathcal{A}^{\mathrm{cyc}} 
\end{align*}
for $\bk \in I$. 

To write down the Ohno-type relation we need the $\mathbb{Q}$-linear map $L$ defined as follows. 
We define the stuffle product $*$ on $\mathfrak{H}^{1}_{\mathbb{Q}}$ by 
\begin{align*}
& 
1*w=w*1=1, \\ 
& 
(e_{k}w)*(e_{l}w')=e_{k}(w*e_{l}w')+e_{l}(e_{k}w*w')+e_{k+l}(w*w')
\end{align*}
for $w, w' \in \mathfrak{H}^{1}_{\mathbb{Q}}$ and $k, l \ge 1$. 
We define the $\mathbb{Q}$-linear map $L: \mathfrak{H}^{1} \to \mathfrak{H}^{1}$ by 
\begin{align*}
L(e_{\bk})=-\frac{1}{2\mathrm{dep}(\bk)+1}e_{1}*e_{\bk} 
\end{align*}
for $\bk \in I$. 

Let $\mathbb{Q}I$ be the $\mathbb{Q}$-vector space with the basis $I$. 
By abuse of notation we denote by the same letter $L$ 
the $\mathbb{Q}$-linear transformation on $\mathbb{Q}I$
defined by $L(\bk)=\sum_{\bk'}a_{\bk, \bk'}\bk'$, 
where $a_{\bk,\bk'}$ is a rational number determined by  
$L(e_{\bk})=\sum_{\bk'}a_{\bk, \bk'}e_{\bk'}$. 
We also extend the map $Z^{\mathrm{cyc}}: I \to \mathcal{A}^{\mathrm{cyc}}$ to 
$\mathbb{Q}I$ by $\mathbb{Q}$-linearity. 

Set $\varpi=(1-\zeta_{p})_{p} \in \mathcal{A}^{\mathrm{cyc}}$. 
Then it holds that 
\begin{align}
\varpi Z^{\mathrm{cyc}}(\bk)=Z^{\mathrm{cyc}}(L(\bk))   
\label{eq:varpi-L}
\end{align}
(see \cite{BTT}). 

\begin{cor}
For $\bk \in I\setminus\{\emptyset\}$ and $m \ge 0$, it holds that 
\begin{align*}
\sum_{\substack{\be \in (\mathbb{Z}_{\ge 0})^{s} \\ \mathrm{wt}(\be)=m}}
Z^{\mathrm{cyc}}((\bk^{\vee}+\be)^{\vee})=
\sum_{l=0}^{m}\frac{(-1)^{m-l}}{m-l+1}
\sum_{\substack{\be' \in (\mathbb{Z}_{\ge 0})^{r} \\ \mathrm{wt}(\be)=l}}
Z^{\mathrm{cyc}}(L^{m-l}(\bk+\be')),    
\end{align*} 
where $r=\mathrm{dep}(\bk)$ and $s=\mathrm{dep}(\bk^{\vee})$. 
\end{cor}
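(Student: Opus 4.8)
The plan is to specialize the Ohno-type relation of Theorem~\ref{thm:Ohno-rel} to $n=p$ a prime and then push the identity into the quotient algebra $\mathcal{A}^{\mathrm{cyc}}$. Fix $\bk \in I\setminus\{\emptyset\}$ and $m \ge 0$, and write $r=\mathrm{dep}(\bk)$, $s=\mathrm{dep}(\bk^{\vee})$. For every prime $p \ge r+m+1$, Theorem~\ref{thm:Ohno-rel} (with $\zeta_n$ taken to be $\zeta_p$) gives
\begin{align*}
\sum_{\substack{\be \in (\mathbb{Z}_{\ge 0})^{s} \\ \mathrm{wt}(\be)=m}}
z_{p}((\bk^{\vee}+\be)^{\vee}; \zeta_{p})
=\sum_{l=0}^{m}\frac{1}{p}\binom{p}{m-l+1}(1-\zeta_{p})^{m-l}
\sum_{\substack{\be' \in (\mathbb{Z}_{\ge 0})^{r} \\ \mathrm{wt}(\be')=l}}
z_{p}(\bk+\be'; \zeta_{p}).
\end{align*}
Because $\mathcal{A}^{\mathrm{cyc}}$ discards any finite set of primes, it suffices to reduce this identity modulo $(p)$ in $\mathbb{Z}[\zeta_p]/(p)$ and assemble the results over all sufficiently large $p$.

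The arithmetic heart of the argument is the congruence
\begin{align*}
\frac{1}{p}\binom{p}{j}=\frac{1}{j}\binom{p-1}{j-1}\equiv \frac{(-1)^{j-1}}{j}\pmod{p}
\qquad (1\le j\le p-1),
\end{align*}
which follows from $\binom{p-1}{j-1}=\prod_{i=1}^{j-1}\frac{p-i}{i}\equiv(-1)^{j-1}\pmod p$. Since $r\ge 1$ and $p\ge r+m+1$, we have $1\le m-l+1\le p-1$ for $0\le l\le m$, so with $j=m-l+1$ the integer $\frac{1}{p}\binom{p}{m-l+1}$ reduces modulo $p$ to $(-1)^{m-l}(m-l+1)^{-1}$, which is exactly the image of the rational scalar $\frac{(-1)^{m-l}}{m-l+1}$ under the $\mathbb{F}_p$-algebra structure of $\mathbb{Z}[\zeta_p]/(p)$ (note $m-l+1<p$ is invertible). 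Collecting these component-wise over all primes, I would replace the integer coefficient $\frac{1}{p}\binom{p}{m-l+1}$ by the $\mathbb{Q}$-scalar $\frac{(-1)^{m-l}}{m-l+1}$ acting on $\mathcal{A}^{\mathrm{cyc}}$.

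Next I handle the factor $(1-\zeta_p)^{m-l}$. By definition $\varpi=(1-\zeta_p)_p$, so the sequence $((1-\zeta_p)^{m-l})_p$ is precisely $\varpi^{m-l}\in\mathcal{A}^{\mathrm{cyc}}$. Reading the $p$-components through $Z^{\mathrm{cyc}}(\bk)=(z_p(\bk;\zeta_p)\bmod(p))_p$, the reduced identity becomes
\begin{align*}
\sum_{\substack{\be \in (\mathbb{Z}_{\ge 0})^{s} \\ \mathrm{wt}(\be)=m}}
Z^{\mathrm{cyc}}((\bk^{\vee}+\be)^{\vee})
=\sum_{l=0}^{m}\frac{(-1)^{m-l}}{m-l+1}\,\varpi^{m-l}
\sum_{\substack{\be' \in (\mathbb{Z}_{\ge 0})^{r} \\ \mathrm{wt}(\be')=l}}
Z^{\mathrm{cyc}}(\bk+\be').
\end{align*}
Finally I would iterate \eqref{eq:varpi-L}, namely $\varpi Z^{\mathrm{cyc}}(\bk')=Z^{\mathrm{cyc}}(L(\bk'))$, to get $\varpi^{m-l}Z^{\mathrm{cyc}}(\bk')=Z^{\mathrm{cyc}}(L^{m-l}(\bk'))$ (using the $\mathbb{Q}$-linear extensions of $L$ and $Z^{\mathrm{cyc}}$ to $\mathbb{Q}I$), which converts the right-hand side into $\sum_{l=0}^{m}\frac{(-1)^{m-l}}{m-l+1}\sum_{\be'}Z^{\mathrm{cyc}}(L^{m-l}(\bk+\be'))$ and yields the claim. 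I expect the only genuine obstacle to be the bookkeeping of the previous two paragraphs: one must verify that the integer coefficient from Theorem~\ref{thm:Ohno-rel} coincides, modulo $p$, with the rational coefficient of the target identity for all but finitely many $p$, so that the equality descends unambiguously to $\mathcal{A}^{\mathrm{cyc}}$; once this is in place, the remainder is a direct substitution.
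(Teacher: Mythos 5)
Your proposal is correct and follows essentially the same route as the paper's proof: specialize Theorem~\ref{thm:Ohno-rel} to prime $n=p$, reduce modulo $(p)$ using the congruence $\frac{1}{p}\binom{p}{m-l+1}\equiv\frac{(-1)^{m-l}}{m-l+1}\pmod{p}$, and absorb the powers of $\varpi=(1-\zeta_p)_p$ via iteration of \eqref{eq:varpi-L}. The paper states these three ingredients without elaboration; you have merely supplied the routine verifications (the proof of the congruence, the invertibility of $m-l+1$ for large $p$, and the $\mathbb{Q}$-linear extension needed to iterate $L$), all of which are sound.
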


\begin{proof}
It follows from Theorem \ref{thm:Ohno-rel}, \eqref{eq:varpi-L} and 
\begin{align*}
\frac{1}{p}\binom{p}{m-l+1}\equiv \frac{(-1)^{m-l}}{m-l+1} \qquad \mathrm{mod}\,\, p 
\qquad (m\ge l \ge 0)
\end{align*} 
for any prime $p$ such that $p\ge r+m+1$. 
\end{proof}

%%%%%%%%%%%%%%%%%%%%%%%%%%%%%%%%%

\section*{Acknowledgments}

The research of the author is supported by 
JSPS KAKENHI Grant Number 18K03233. 
The author is deeply grateful to Kojiro Oyama for explanation about \cite{Oyama}.  
He also wishes to express his gratitude to Professor Masanobu Kaneko 
for valuable information on \cite{Kaneko}. 

%%%%%%%%%%%%%%%%%%%%%%%%%%%%%%%%%%%%%%%%%%%%%%%%%%%%%%%%%%%%%%%%%%%%%%%%%%%%%%%

\end{document}